\def\XXint#1#2#3{{\setbox0=\hbox{$#1{#2#3}{\int}$ }
		\vcenter{\hbox{$#2#3$ }}\kern-.6\wd0}}
\newtheorem{theorem}{Theorem}[section]
\newtheorem{lemma}[theorem]{Lemma}
\newtheorem{definition}[theorem]{Definition}
\newtheorem{remark}[theorem]{Remark}
\numberwithin{equation}{section}
\newenvironment{proof}[1][Proof]{\noindent\textbf{#1.} }{\hfill $\Box$}
\makeatletter\setlength{\textwidth}{16cm}
\begin{document}
	
	\title{Energy equality of the weak solutions to  non-Newtonian fluids equations\footnote{Authors are listed alphabetically by surname then given name. Authors equally share the first authorship. }}
	\author{ Yi Feng\footnote{mx120220310@stu.yzu.edu.cn}, Weihua Wang\footnote{Corresponding author: wangvh@163.com, wangweihua15@mails.ucas.ac.cn (W.Wang).}\\
		[0.2cm] {\small School of Mathematical Science, Yangzhou University,}\\
		[0.2cm] {\small  Yangzhou, Jiangsu 225009,  China}}
	\date{\today}
	\maketitle
	\begin{abstract}
		In this paper, we study the problem of energy equality for weak solutions of the 3D incompressible non-Newtonian fluid equations with initial  value conditions. We derive new sufficient conditions via Sobolev multiplier spaces that guarantee the validity of the energy equality. Moreover, the aforementioned equations are often associated with the uniqueness problem of weak solutions for non-Newtonian fluids, which, in a certain sense, constitutes the positive counterpart of Onsager's conclusion for non-Newtonian fluids.
	\end{abstract}
	\smallbreak
	
	\textit{Keywords}: non-Newtonian fluids; Energy equality; weak solution;  Multiplier spaces.
	\smallskip
	
	\textit{2020 AMS Subject Classification}:  76W05; 76B03; 35Q35; 76D05
	
	\section{Introduction}
We consider the following 3D incompressible non-Newtonian  fluid equations  with initial value in the appropriate space
\begin{equation}\label{eq1.1}
	\begin{cases}
		u_{t} +(u\cdot \nabla)u - \mu\text{div}\left(|D(u)|^{r-2}D(u)\right) + \nabla p=0,\quad &(x,t)\in \mathbb{R}^{3}\times (0,T),\\
		\text{div}~ u =0, \quad &(x,t)\in \mathbb{R}^{3}\times (0,T),\\
		u(x,0)=u_{0}, \quad &x\in \mathbb{R}^{3}.
	\end{cases}
\end{equation}
where
\begin{equation*}
	D(u)=\frac{1}{2}\left(\nabla u+(\nabla u)^{T}\right)
\end{equation*}
is the symmetric part of the velocity gradient, $u$ and $p$ represent the velocity  field and  scalar pressure, respectively. $u_0$ denotes  the  initial value of the velocity field, which also  satisfies  $\text{div}~u_0=0$ in distribution sense.  To simplify the problem, we take the viscosity coefficient as unity and set the external force to vanish. The mathematical theory concerning power-law fluids and their generalization (i.e. non-Newtonian  fluids) is  quite rich, and we  may refer to \cite{CrFG2026,Galdi08,BuKP2019,BuMM2023} and  the references  therein.

 Here we focus on the positive part of Onsager's conjecture \cite{Onsager1949} for non-Newtonian fluids, i.e., the conditions under which the energy equality holds. The energy equality for non-Newtonian fluid equations is expressed as follows:
\begin{equation}\label{eq1.2}
	\frac{1}{2}\|u(t)\|^{2}_{L^{2}} + \int_{0}^{t}\|D(u)(\tau)\|^{r}_{L^{r}}{\rm d}\tau = \frac{1}{2}\|u_0\|^{2}_{L^{2}},
\end{equation}
for any $~0\le t<T$.  Our aim is to show the energy equality for a larger class of solutions, which is  closely related to the  uniqueness of weak solution  of  non-Newtonian  fluids \cite{BuKP2019,BuMM2023}.

 In the case of $r=2$,  Eq. \eqref{eq1.1} describes  Newtonian fluid (Navier-Stokes equations), that is
\begin{equation}\label{eq1.3}
	\begin{cases}
		u_{t} +(u\cdot \nabla)u - \mu\triangle u + \nabla p=0,\quad &(x,t)\in X\times (0,T),\\
		\text{div} ~u =0, \quad &(x,t)\in X\times (0,T),\\
		u(x,0)=u_{0}, \quad &x\in X.
	\end{cases}
\end{equation}
with the appropriate boundary conditions, where $X$ denotes $\mathbb{R}^{3}$ or domain $\Omega$ in $\mathbb{R}^{3}$.
Correspondingly, the energy equality of Newtonian fluid equations is
\begin{equation}\label{eq1.4}
	\frac{1}{2}\|u(t)\|^{2}_{L^{2}} + \int_{0}^{t}\|\nabla u(\tau)\|^{2}_{L^{2}}{\rm d}\tau = \frac{1}{2}\|u_0\|^{2}_{L^{2}},
\end{equation}
for any $~0\le t<T$.

For the Leray-Hopf weak solutions of  the Navier-Stokes  equations, Lions \cite{LJL60} obtained the energy equality in $L^{4}(0,T; L^{4}(\Omega))$. Some years later, Serrin \cite{SJ63}  derived the energy conservation criterion: $L^{p}(0,T; L^{q}(\Omega))$ with $\frac{2}{p} + \frac{n}{q} \leq 1$(scaling invariant space).  With Serrin's results, Masuda \cite{MK84} obtained the uniqueness of the weak solution of the Navier-Stokes  equations  \eqref{eq1.3}.
 And Shinbrot \cite{SM74} improved Serrin's result to $\frac{2}{p} + \frac{2}{q} \leq 1$ for $q\geq 4$, which does not depend on the spatial dimension $n$ and  is larger than that of the scaling invariant space given by Serrin \cite{SJ63}.

  Cheskidov et al.~\cite{CheCoFrSh08} acquired the energy equality for  the  Euler equations in the largest   function space  $L^{3}(0,T; B^{\frac{1}{3}}_{3,\infty}(\mathbb{R}^{3}))$. From this result and the  interpolation  with $L^{\infty}_{t}L^{2}_{x}\cap L^{2}_{t}H^{1}_{x}$, Cheskidov \& Luo \cite{ChesLuo20} gained the energy equality in the
following class:\newline
$L^{p}(0,T; B^{\alpha}_{q,\infty}(\mathbb{R}^{3}))$  for\\
 $R(p,q)-1-\alpha =
  \begin{cases}
    1+\alpha-\frac{3}{q} \text{~with~} 3\leq p \text{~and~} 1\leq q\leq p\leq\infty;\\
    \frac{1}{5} - \frac{\alpha}{5} + \frac{3}{5q}  \text{~with~} 1\leq\frac{1}{p}+\frac{2}{q}, 0<p\leq 3\text{~and~} 1\leq q\leq\infty;\\
    \frac{1}{q} \text{~with~} \frac{1}{p}+\frac{2}{q}\leq 1 \text{~and~} 1\leq q\leq p\leq\infty
  \end{cases}$\\
or
\begin{equation*}
   L^{p}_{w}(0,T; B^{\alpha}_{q,\infty}(\mathbb{R}^{3})) \text{~for~} R(p,q)-1-\alpha=\frac{1}{q} \text{~with~} \frac{1}{p}+\frac{2}{q}\leq 1 \text{~and~} 1\leq q < p\leq\infty,
\end{equation*}
where $B^{\alpha}_{q,\infty}$  presents the inhomogeneous Besov space and $R(p,q)\triangleq \frac{2}{p}+\frac{3}{q}$.
In the same year, Berselli and Chiodaroli \cite{BC20} showed the energy equality under the following conditions
\begin{equation*}
  L^{p}(0,T; W^{1,q}(\Omega)) \text{~for~} R(p,q)-2=
  \begin{cases}
    2-\frac{3}{q} \text{~with~} \frac{3}{2}<q<\frac{9}{5};\\
    \frac{3}{5q}  \text{~with~} \frac{9}{5}\leq q \leq 3;\\
    \frac{3}{q} - \frac{4}{q+2} \text{~with~} 3<q\leq\infty.
  \end{cases}
\end{equation*}

 Galdi \cite{GGP19} proposed when
  \begin{equation*}
  	u\in L^{4}(0,T;L^{4}(\Omega)),
  \end{equation*}
   by  virtue of the  mollifying procedure and duality argument, the energy equality of the distributional solutions to the Navier-Stokes equations still holds true, what is impressive is that the Leray-Hopf class is not necessary.
   Moreover, Berselli and Chiodaroli \cite{BC20} also presented that if
   \begin{equation*}
   	u\in L^{p}(0,T;L^{q}(\Omega))\quad\text {with}\quad \frac{2}{p}+\frac{2}{q}= 1; ~q\geq4,
   \end{equation*}
   the energy equality of the distributional solutions to \eqref{eq1.3} holds. This result follows from a duality argument, analogous to the approach adopted  in Galdi \cite{GGP19} and covers the full range of exponents. In  the work of  Wu \cite{Wu24}, the problem of energy equality for the distributional solutions to the fractional Navier-Stokes equations is considered: if $u\in L^{\frac{4\alpha}{2\alpha-1}}(0,T;L^{4}(\mathbb{R}^{3}))$ with $\alpha\geq 1$, the energy equality for the distributional solutions holds true.

  In the case of $r\neq2$,  Eq. \eqref{eq1.1} determines the motion behavior of incompressible homogeneous non-Newtonian fluid.
   Yang \cite{YJ19} extended Shinbrot's energy equality \cite{SM74} for the Navier-Stokes equations to non-Newtonian fluid in
   \begin{equation*}
     L^{p}(0,T; L^{q}(\mathbb{R}^{n})), \begin{cases}
       \frac{1}{p} + \frac{1}{q} \leq 1-\frac{1}{r}, \frac{2r}{r-1}\leq q \leq  \frac{2r}{r-2}, r>2;\\
       \frac{1}{p} + \frac{r-1}{q} \leq \frac{r-1}{2}, q\geq \frac{2r}{r-1}, r\leq2.
     \end{cases}
   \end{equation*}
   Shortly thereafter, Zhang\cite{ZZ19} generalized Yang's results \cite{YJ19} that if $u$ satisfies one of the following conditions:
   \begin{equation*}
   	\begin{cases}
   		(\romannumeral 1) &u\in L^{p}(0,T;L^{q}(\mathbb{R}^{n})),~ \frac{1}{p}+\frac{r-1}{q}= \frac{r-1}{2},~\frac{2r}{r-1}\leq q<\infty,~r>\frac{2(n+1)}{n+2};\\
   		(\romannumeral 2)&u\in L^{3}(0,T;\dot{B}^{\frac{1}{2}}_{\frac{6n}{2n+1},2}(\mathbb{R}^{n})),~r>\frac{2(n+1)}{n+2},
   	\end{cases}
   \end{equation*}
   then the energy equality \eqref{eq1.2} holds true.
   Beir\~{a}o and Yang successfully \cite{BY19} extended Berselli and Chiodaroli's conclusion \cite{BC20} to the case of the non-Newtonian fluids provided the gradient of velocity $\nabla u$ satisfies the following conditions:
   \begin{equation*}
   	\begin{cases}
   		(\romannumeral 1)_1   & \frac{9}{5}<r\leq2,~\frac{9-3r}{2}<q\leq \frac{9}{5},~p=\frac{q(5r-9)}{3r+2q-9},  ~\nabla u \in L^{\frac{q(5r-9)}{3r+2q-9}}(0,T;L^{q}(\Omega));\\
   		(\romannumeral 1)_2   & 2<r<\frac{11}{5},~\frac{3r}{5r-6}\leq q\leq \frac{9}{5},~p=\frac{q(5r-9)}{3r+2q-9},  ~\nabla u \in L^{\frac{q(5r-9)}{3r+2q-9}}(0,T;L^{q}(\Omega));\\
   		(\romannumeral 2)_1 & r<\frac{11}{5},~q>\frac{9}{5},~p=\frac{5q}{5q-6}, ~\nabla u \in L^{\frac{5q}{5q-9}}(0,T;L^{q}(\Omega));\\
   		(\romannumeral 2)_2 & r\geq\frac{11}{5}.
   	\end{cases}
   \end{equation*}
 For results regarding the energy equality  for non-Newtonian fluids, we also refer to \cite{SinBa23,WangMH22} and   the references   therein.

   Chen and Zhang \cite{CZ23} started with   \cite[Theorem 1]{BC20} and  identified several sufficient conditions involving  the Sobolev multiplier space;  the results are as follows:
\begin{equation*}
	\begin{cases}
		(\romannumeral 1)&|u|^{\frac{2}{\alpha+\theta}-2}u\in L^{\frac{2(\alpha +\theta)}{1-\theta}}\mathcal{M}^{3}(\dot{H}^{\frac{\theta}{\alpha+\theta}}\to L^{2(\alpha+\theta)}), ~ \alpha,\theta\ge0,~\frac{1}{2}\le\alpha+\theta\le1;\\
		(\romannumeral 2)&|u|^{\frac{10\theta^{2}-23\theta+7}{3+8\theta-5\theta^{2}}}u\in L^{\frac{2(-5\theta^{2}+8\theta+3)}{5(1-\theta)^{2}}}\mathcal{M}^{3}(\dot{H}^{1}\to L^{\frac{-5\theta^{2}+8\theta+3}{3-2\theta}}),\\
		&|\nabla u|^{\frac{10-5\theta}{7-3\theta}} \in L^{\frac{(2\alpha+2)(7-3\theta)}{5-5\theta+12\alpha-8\alpha\theta}}\mathcal{M}^{3}(\dot{H}^{\frac{1}{\alpha+1}}\to L^{1}),~\alpha\geq0,~0\leq\theta\leq\frac{1}{2};\\
		(\romannumeral 3)&\nabla u \in L^{\frac{(4-2\theta)(1+\alpha)}{1-\theta+(3-\theta^{2})\alpha}}\mathcal{M}^{3}(\dot{W}^{\frac{1+\theta}{1+\alpha},\frac{6-3\theta}{3-\theta^{2}}}\to L^{\frac{12-6\theta}{9-5\theta}}),~
		u \in L^{\frac{4-2\theta}{1-\theta}}(L^{\frac{12-6\theta}{3-\theta}}),\\
		&~\alpha\geq 0,~-1\leq \theta <0.
	\end{cases}
\end{equation*}
In a recent extension of result ($i$)  by Chen and Zhang \cite{CZ23} on the Navier-Stokes equations, Feng and Wang \cite{FW26,FW25}  obtained  its applicability to the Navier-Stokes-Maxwell equations, the inhomogeneous equations of hydrodynamics and magnetohydrodynamics.

Inspired by \cite{BC20} and \cite{CZ23},  our  primary  objective is to find weaker sufficient conditions   that ensure  the energy equality of the weak solutions to   non-Newtonian fluid equations still holds true (i.e., \eqref{eq1.2}) in Sobolev multiplier spaces,  we use inequalities about  multiplier  spaces  come to some conditions, involving $|u|^{\frac{2}{\alpha+\theta}-2}u$ in the problem of the energy equality for the weak solutions
to  non-Newtonian  fluid  equations. The main purpose  in  this paper is to generalize  the results of Chen and Zhang \cite{CZ23}  to
non-Newtonian fluids case, by mollifying approximation, we can make sure that each term is bounded in the sense of integral;  especially $\text{div}(|D(u)|^{r-2}D(u))$ brings difficulties.  Specifically, our main results are as follows:
\begin{theorem}\label{th1.1}
		Let $u$ be a weak solution of \eqref{eq1.1}   on some time interval $[0,T]$ with $u_0\in L_{\sigma}^{2}(\mathbb{R}^{3})$ and$~0<T\leq\infty$. If $~u$ satisfies $|u|^{\frac{2}{\alpha+\theta}-2}u\in L^{\frac{r(5r-6)(\alpha +\theta)}{5r^{2}-(11+2\theta)r+6}}\mathcal{M}^{3}(\dot{H}^{\frac{\theta}{\alpha+\theta}}\to L^{\frac{r(\alpha+\theta)}{r-1}})$, where $\alpha,\theta\ge0$, $\frac{1}{2}\le\alpha+\theta\le1$, $r\geq
		2$
		then the energy equality \eqref{eq1.2} holds for any $0\le t<T$.
\end{theorem}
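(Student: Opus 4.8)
The plan is to run a spatial mollification argument in the spirit of Constantin--E--Titi, adapted to the non-Newtonian dissipation. Let $\rho_\epsilon$ be a standard mollifier and write $f^\epsilon=f*\rho_\epsilon$. Mollifying \eqref{eq1.1}, testing the result against $u^\epsilon$, and integrating over $\mathbb{R}^{3}\times[0,t]$ yields the approximate identity
\begin{equation*}
\frac{1}{2}\|u^\epsilon(t)\|_{L^2}^2-\frac{1}{2}\|u^\epsilon(0)\|_{L^2}^2+\int_0^t\!\!\int_{\mathbb{R}^3}\big(|D(u)|^{r-2}D(u)\big)^\epsilon:D(u^\epsilon)\,\mathrm{d}x\,\mathrm{d}\tau=\int_0^t\!\!\int_{\mathbb{R}^3}(u\otimes u)^\epsilon:\nabla u^\epsilon\,\mathrm{d}x\,\mathrm{d}\tau,
\end{equation*}
where the pressure contribution vanishes because $\int\nabla p^\epsilon\cdot u^\epsilon=-\int p^\epsilon\,\mathrm{div}\,u^\epsilon$ and $\mathrm{div}\,u^\epsilon=(\mathrm{div}\,u)^\epsilon=0$, and where I have used the symmetry of $|D(u)|^{r-2}D(u)$ to replace $\nabla u^\epsilon$ by $D(u^\epsilon)=(D(u))^\epsilon$ in the viscous term. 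It then suffices to pass to the limit $\epsilon\to0$ in each term. For the endpoint terms, $\|u^\epsilon(t)\|_{L^2}\to\|u(t)\|_{L^2}$ for a.e.\ $t$ and $\|u^\epsilon(0)\|_{L^2}\to\|u_0\|_{L^2}$ by $L^2$-convergence of mollifications (recall $u_0\in L_\sigma^2$).

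The viscous term is the subtlety the authors emphasize: because mollification does not commute with the nonlinearity $D\mapsto|D|^{r-2}D$, one cannot reduce it to a square of $D(u^\epsilon)$ as in the Navier--Stokes case. Instead I would argue by duality. From the weak-solution class one has $D(u)\in L^r(0,T;L^r)$, whence $|D(u)|^{r-2}D(u)\in L^{r'}(0,T;L^{r'})$ with $r'=\tfrac{r}{r-1}$. Since $(|D(u)|^{r-2}D(u))^\epsilon\to|D(u)|^{r-2}D(u)$ strongly in $L^{r'}$ and $(D(u))^\epsilon\to D(u)$ strongly in $L^r$, the product of these two strongly convergent sequences in the dual pair $L^{r'}\times L^r$ converges in $L^1(0,T;L^1)$ to $|D(u)|^{r-2}D(u):D(u)=|D(u)|^r$, producing exactly $\int_0^t\|D(u)(\tau)\|_{L^r}^r\,\mathrm{d}\tau$.

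The main work is the right-hand side. Using $\mathrm{div}\,u=0$ I replace it by the commutator
\begin{equation*}
\int_0^t\!\!\int_{\mathbb{R}^3}\big[(u\otimes u)^\epsilon-u^\epsilon\otimes u^\epsilon\big]:\nabla u^\epsilon\,\mathrm{d}x\,\mathrm{d}\tau=:\int_0^t\!\!\int_{\mathbb{R}^3}r_\epsilon(u,u):\nabla u^\epsilon\,\mathrm{d}x\,\mathrm{d}\tau,
\end{equation*}
since the remaining piece $\int u^\epsilon\otimes u^\epsilon:\nabla u^\epsilon=\tfrac12\int u^\epsilon\cdot\nabla|u^\epsilon|^2$ vanishes by incompressibility. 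Here the hypothesis enters: expanding $r_\epsilon(u,u)$ into increments of $u$, I reorganize the flux so that the factor $|u|^{\frac{2}{\alpha+\theta}-2}u$ is exposed, and then estimate its pairing against the remaining factors by H\"older in space, invoking the multiplier bound $\big\||u|^{\frac{2}{\alpha+\theta}-2}u\cdot g\big\|_{L^{r(\alpha+\theta)/(r-1)}}\lesssim\big\||u|^{\frac{2}{\alpha+\theta}-2}u\big\|_{\mathcal{M}}\,\|g\|_{\dot H^{\theta/(\alpha+\theta)}}$ to absorb a fractional-derivative factor. The residual factors are controlled by Gagliardo--Nirenberg interpolation between the energy bound $u\in L^\infty(0,T;L^2)$ and the dissipation bound $\nabla u\in L^r(0,T;L^r)$ (the latter via Korn's inequality applied to $D(u)\in L^r$). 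The time exponent $p=\frac{r(5r-6)(\alpha+\theta)}{5r^{2}-(11+2\theta)r+6}$ is precisely the value for which the interpolated time-integrabilities combine with $L^p_t$ to total exponent $1$, so that the spatial estimate is integrable in time uniformly in $\epsilon$; since the commutator integrand tends to $0$ as $\epsilon\to0$, dominated convergence forces the whole term to vanish.

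The hard part will be this convective commutator: one must balance three competing norms --- the multiplier norm of $|u|^{\frac{2}{\alpha+\theta}-2}u$, the fractional Sobolev norm $\dot H^{\theta/(\alpha+\theta)}$ of the increments, and $\nabla u^\epsilon$ --- against only the $L^r$-in-space dissipation, rather than the $L^2$ dissipation available for Navier--Stokes. This is exactly what forces the recomputed exponents and is the origin of the coefficient $5r^{2}-(11+2\theta)r+6$; the delicate point is to verify that this H\"older bookkeeping closes for every $r\ge2$ and all admissible $(\alpha,\theta)$ with $\tfrac12\le\alpha+\theta\le1$ (note it collapses to the Chen--Zhang condition at $r=2$). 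Once the commutator is shown to vanish, letting $\epsilon\to0$ yields \eqref{eq1.2} for a.e.\ $t$, and the energy equality for every $0\le t<T$ follows from the weak continuity of $u$ in $L^2$.
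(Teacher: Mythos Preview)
Your overall strategy is a genuinely different route from the paper: you run a \emph{spatial} Constantin--E--Titi mollification with a commutator $r_\epsilon(u,u)=(u\otimes u)^\epsilon-u^\epsilon\otimes u^\epsilon$, whereas the paper mollifies \emph{in time} a smooth-in-$x$ approximating sequence $u^j$, inserts $u^j_\varepsilon$ as test function in the weak formulation (their Lemma~\ref{le2.6}), and never forms a commutator at all. For the convective term the paper integrates by parts, $((u\cdot\nabla)u,u^j_\varepsilon-u)=-((u\cdot\nabla)(u^j_\varepsilon-u),u)$, and then bounds $\int\||u|^2\|_{L^{r'}_x}\|\nabla(u^j_\varepsilon-u)\|_{L^r_x}\,{\rm d}\tau$ directly via their Lemma~\ref{le2.4}: the multiplier hypothesis enters only through the identity $\||u|^2\|_{L^{r'}_x}=\||u|^{\frac{2}{\alpha+\theta}-2}u\cdot u\|_{L^{r(\alpha+\theta)/(r-1)}}^{\alpha+\theta}$, after which one applies the multiplier norm and Gagliardo--Nirenberg on the leftover $\|u\|_{\dot H^{\theta/(\alpha+\theta)}}$. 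Your treatment of the viscous term by $L^{r'}\times L^r$ duality is essentially the same as theirs and is fine.

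Where your proposal is shaky is the commutator step. The claim that one ``expands $r_\epsilon(u,u)$ into increments and exposes the factor $|u|^{\frac{2}{\alpha+\theta}-2}u$'' does not correspond to any concrete estimate: the increment representation of $r_\epsilon$ produces differences $u(\cdot-y)-u(\cdot)$, not the pointwise nonlinear quantity $|u|^{\frac{2}{\alpha+\theta}-2}u$, and there is no obvious way to pair such increments with a multiplier norm of that form. What actually makes your scheme close is simpler and has nothing to do with increments: the paper's pointwise-in-time bound (their Lemma~\ref{le2.4}) together with H\"older in time shows $u\otimes u\in L^{r'}(0,T;L^{r'})$ under the hypothesis (the time exponents add to $(r-1)/r$), and hence $r_\epsilon\to0$ strongly in $L^{r'}_{t,x}$ while $\nabla u^\epsilon$ stays bounded in $L^r_{t,x}$, giving $\int r_\epsilon:\nabla u^\epsilon\to0$ with no dominated-convergence step. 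You should rewrite the convective part along these lines. A secondary point: the paper's time-mollification route, by working entirely inside the weak formulation with divergence-free test functions, sidesteps both the pressure and the time-regularity of $u^\epsilon$ needed to justify your approximate energy identity; in your approach on $\mathbb{R}^3$ you would still need to recover $p$ as a distribution (or double-mollify $\rho_\epsilon*\rho_\epsilon*u$ and transfer one convolution to the test slot) before the identity is rigorous.
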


\begin{remark}
  In Beir\~{a}o and Yang \cite{BY19}, we can observe that $p=\frac{9(5r-9)}{3r+2q-9}\geq3$ in Theorem 4.3 $(i)_2$, correspondingly, taking $\alpha=\frac{1}{2}$ and $\theta=0$ in Theorem \ref{th1.1}, we have $ \frac{r(5r-6)(\alpha +\theta)}{5r^{2}-(11+2\theta)r+6}\geq \frac{r}{2(r-1)}>\frac{1}{2}$, additionally, taking $\alpha=1$ and $\theta=0$ in Theorem ~\ref{th1.1}, we have $u~\in L^{\frac{r}{r-1}}\mathcal{M}^{3}(\dot{H}^{\frac{\theta}{\alpha+\theta}}\to L^{\frac{r(\alpha+\theta)}{r-1}})$, where $\frac{r}{r-1}>1$. Obviously, the indicators of time integrability have a wider range than that in \cite{BY19}.
\end{remark}

	\begin{theorem}\label{th1.2}
		Let $u$ be a weak solution of \eqref{eq1.1}   on some time interval $[0,T]$ with $u_0\in L_{\sigma}^{2}(\Omega)$ and$~0<T\leq\infty$. If $~u$ satisfies 	\begin{equation*}
			|u|^{\frac{10\theta^{2}-23\theta+7}{3+8\theta-5\theta^{2}}}u\in L^{\frac{(5r-6)(-5\theta^{2}+8\theta+3)}{(5r-6)(-\theta+4)+2(5\theta^{2}-8\theta-3)}}\mathcal{M}^{3}(\dot{H}^{1}\to L^{\frac{-5\theta^{2}+8\theta+3}{3-2\theta}})
		\end{equation*} and
		\begin{equation*}
			|\nabla u|^{\frac{10-5\theta}{7-3\theta}} \in L^{\frac{(\alpha+1)(7-3\theta)(5r-6)}{2(3-2\theta)(\alpha+1)(5r-6)-2(7-3\theta)}}\mathcal{M}^{3}(\dot{H}^{\frac{1}{\alpha+1}}\to L^{1}),
		\end{equation*}
		where $ \alpha\geq0,~r\geq2,~0\leq\theta<\frac{1}{2}$, the energy equality \eqref{eq1.2} holds for any $0\le t<T$.
	\end{theorem}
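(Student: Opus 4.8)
The plan is to follow the mollification scheme of Chen and Zhang \cite{CZ23}, adapted to the degenerate dissipation of \eqref{eq1.1}, and to reduce the energy equality to the vanishing of a single convective commutator. From the definition of a weak solution together with Korn's inequality we have at our disposal the a priori bounds $u\in L^{\infty}(0,T;L^{2})$ and $\nabla u\in L^{r}(0,T;L^{r})$; equivalently $|D(u)|^{r-2}D(u)\in L^{r'}(0,T;L^{r'})$ with $r'=\frac{r}{r-1}$. Let $\rho_{\epsilon}$ be a standard spatial mollifier and write $u^{\epsilon}=u*\rho_{\epsilon}$. Mollifying \eqref{eq1.1}, testing against $u^{\epsilon}$, and using $\text{div}\,u^{\epsilon}=0$ to kill the pressure, I would obtain after integration over $\mathbb{R}^{3}\times(0,t)$ the identity
\begin{equation*}
\frac{1}{2}\|u^{\epsilon}(t)\|_{L^{2}}^{2}-\frac{1}{2}\|u^{\epsilon}(0)\|_{L^{2}}^{2}+\int_{0}^{t}\!\!\int_{\mathbb{R}^{3}}\big(|D(u)|^{r-2}D(u)\big)^{\epsilon}:D(u^{\epsilon})\,\mathrm{d}x\,\mathrm{d}\tau=-\int_{0}^{t}\!\!\int_{\mathbb{R}^{3}}\big[(u\cdot\nabla)u\big]^{\epsilon}\cdot u^{\epsilon}\,\mathrm{d}x\,\mathrm{d}\tau .
\end{equation*}
The whole argument consists of passing $\epsilon\to 0$ in each of these four terms.

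Next I would dispose of the dissipation term, which is the genuinely new feature of the non-Newtonian problem. Since $|D(u)|^{r-2}D(u)\in L^{r'}_{t,x}$, mollifier continuity gives $\big(|D(u)|^{r-2}D(u)\big)^{\epsilon}\to|D(u)|^{r-2}D(u)$ in $L^{r'}_{t,x}$, while $D(u^{\epsilon})\to D(u)$ in $L^{r}_{t,x}$; Hölder's inequality with the conjugate pair $(r',r)$ then yields
\begin{equation*}
\int_{0}^{t}\!\!\int_{\mathbb{R}^{3}}\big(|D(u)|^{r-2}D(u)\big)^{\epsilon}:D(u^{\epsilon})\,\mathrm{d}x\,\mathrm{d}\tau\longrightarrow\int_{0}^{t}\|D(u)(\tau)\|_{L^{r}}^{r}\,\mathrm{d}\tau .
\end{equation*}
The point to be careful about is that the nonlinearity $|D(u)|^{r-2}D(u)$ does not commute with the mollifier, so this convergence genuinely relies on the two integrability exponents $r'$ and $r$ being conjugate, which is exactly what the energy bound provides.

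The core of the proof, and the step I expect to be the main obstacle, is showing that the convective term tends to $0$. Writing $[(u\cdot\nabla)u]^{\epsilon}\cdot u^{\epsilon}=\big([(u\cdot\nabla)u]^{\epsilon}-(u\cdot\nabla)u^{\epsilon}\big)\cdot u^{\epsilon}+(u\cdot\nabla)u^{\epsilon}\cdot u^{\epsilon}$ and noting that the last term integrates to $0$ by $\text{div}\,u=0$, it remains to control the commutator $\int_{0}^{t}\!\int C_{\epsilon}\cdot u^{\epsilon}$ with $C_{\epsilon}=[(u\cdot\nabla)u]^{\epsilon}-(u\cdot\nabla)u^{\epsilon}$. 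Following Constantin--E--Titi I would split $C_{\epsilon}$ into a ``gradient-increment'' piece, measured through increments of $\nabla u$, and a ``low-order'' piece built from $u-u^{\epsilon}$. The first piece I would bound using the second hypothesis: the factor $|\nabla u|^{\frac{10-5\theta}{7-3\theta}}$ is placed in the multiplier space $\mathcal{M}^{3}(\dot{H}^{\frac{1}{\alpha+1}}\to L^{1})$, so its action converts the $\dot{H}^{1/(\alpha+1)}$-regularity supplied by interpolation into an $L^{1}$ bound; the second piece I would bound using the first hypothesis, where $|u|^{\frac{10\theta^{2}-23\theta+7}{3+8\theta-5\theta^{2}}}u\in\mathcal{M}^{3}(\dot{H}^{1}\to L^{\frac{-5\theta^{2}+8\theta+3}{3-2\theta}})$ plays the analogous role. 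In both cases the spatial multiplier estimate is combined with Gagliardo--Nirenberg interpolation between $L^{\infty}_{t}L^{2}_{x}$ and $L^{r}_{t}\dot{W}^{1,r}_{x}$, and then a space--time Hölder inequality in the time variable. The delicate point is the bookkeeping of exponents: the degenerate dissipation only furnishes $\nabla u\in L^{r}_{t,x}$ rather than the $L^{2}_{t,x}$ of the Newtonian case, so the interpolation weights shift with $r$, and it is precisely the requirement that the resulting time-Hölder triple be admissible that forces the stated integrability indices, in which the quantity $5r-6$ (equal to $4$ when $r=2$) appears. Each piece then carries a positive power of $\epsilon$ coming from the mollification increment, so the commutator vanishes as $\epsilon\to0$.

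Finally I would pass to the limit. Using $u\in C_{w}([0,T];L^{2})$ and mollifier convergence one has $\|u^{\epsilon}(t)\|_{L^{2}}^{2}\to\|u(t)\|_{L^{2}}^{2}$ for a.e. $t$ and $\|u^{\epsilon}(0)\|_{L^{2}}^{2}\to\|u_{0}\|_{L^{2}}^{2}$. Combining the three limits already established gives \eqref{eq1.2} for almost every $t$; since the right-hand side is constant and the dissipation integral is absolutely continuous in $t$, the map $t\mapsto\|u(t)\|_{L^{2}}^{2}$ is continuous, whence $u\in C([0,T];L^{2})$ and the equality in fact holds for every $0\le t<T$, as claimed.
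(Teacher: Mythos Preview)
Your scheme departs from the paper's in two essential ways, and the second introduces a genuine gap.

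First, the paper mollifies only in \emph{time}: it tests \eqref{eq2.1} against $u^{j}_{\varepsilon}=\eta_{\varepsilon}*u^{j}$ with $u^{j}\to u$ in $L^{r}_{t}W^{1,r}_{x}\cap L^{2}_{t}L^{2}_{x}$, and never writes a Constantin--E--Titi commutator. The convective term is dispatched via the trivial cancellation $((u\cdot\nabla)u,u)=0$: one only needs $\int_{0}^{t_{0}}((u\cdot\nabla)(u^{j}_{\varepsilon}-u),u)\,d\tau\to 0$, which follows from an a~priori bound on the trilinear form together with $\nabla(u^{j}_{\varepsilon}-u)\to 0$ in $L^{r}_{t}L^{r}_{x}$. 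The two hypotheses are not assigned to separate commutator pieces; they cooperate in a single H\"older/Gagliardo--Nirenberg chain (Lemma~\ref{le2.8}), where the integrand is split pointwise as $|u|^{\gamma}|\nabla u|\cdot|u|^{2-\gamma}$ with $\gamma=\frac{7-3\theta}{10-5\theta}$. The second hypothesis, on $|\nabla u|^{1/\gamma}\in\mathcal{M}^{3}(\dot H^{1/(\alpha+1)}\to L^{1})$, controls the first factor (after interpolating $\|u\|_{\dot H^{1/(\alpha+1)}}\lesssim\|\nabla u\|_{L^{r}}^{\vartheta}\|u\|_{L^{2}}^{1-\vartheta}$), while the first hypothesis, rewritten via $s=\frac{15\gamma-10}{5\gamma-3}$ as $|u|^{\frac{2-\gamma}{s-\gamma}-2}u\in\mathcal{M}^{3}(\dot H^{1}\to L^{(s-\gamma)/(1-\gamma)})$, controls $\|u\|_{L^{(2-\gamma)/(1-\gamma)}}^{2-\gamma}$. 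No power of $\epsilon$ ever appears.

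Second, and this is the gap, your assertion that ``each piece then carries a positive power of $\epsilon$ coming from the mollification increment'' is not supported by the assumptions. A CET commutator gains a factor $\epsilon^{\delta}$ only through increment bounds of the type $\|u(\cdot+y)-u\|\lesssim|y|^{\delta}$ (equivalently, Besov-type regularity). The multiplier conditions here are pure boundedness statements, $\|\varphi f\|_{F}\le C\|f\|_{E}$, and yield no smallness in $\epsilon$ whatsoever; nor do they bound increments of $u$ or $\nabla u$. Without that gain your spatial commutator has no reason to vanish, so the convective step is incomplete. Replacing the spatial CET scheme by the paper's time mollification and the direct use of $((u\cdot\nabla)u,u)=0$ is what makes the argument close.
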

\begin{theorem}\label{th1.3}
	Let $u$ be a weak solution of \eqref{eq1.1}   on some time interval $[0,T]$ with $u_0\in L_{\sigma}^{2}(\Omega)$ and$~0<T\leq\infty$. If $~u$ satisfies 	
	\begin{equation*}
		\nabla u \in  L^{p}\mathcal{M}^{3}(\dot{W}^{\frac{1+\theta}{1+\alpha},\frac{6-3\theta}{3-\theta^{2}}}\to L^{\frac{12-6\theta}{9-5\theta}})
	\end{equation*} and
	\begin{equation*}
		u \in L^{q}(L^{\frac{12-6\theta}{3-\theta}}),~ \frac{1}{p}+\frac{1}{q}+\frac{1}{r}=1,~2\leq p<\infty,
	\end{equation*}
	where $\alpha\geq0,~r\ge2\text{ and }-1\leq\theta<0$, the energy equality \eqref{eq1.2} holds for any $0\le t<T$.
\end{theorem}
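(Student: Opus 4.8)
The plan is to follow the mollify-and-test scheme of Berselli--Chiodaroli \cite{BC20} and Chen--Zhang \cite{CZ23}, the only genuinely new feature being the $r$-power dissipation. Fix a standard spatial mollifier $\eta_\epsilon$, set $u^\epsilon=u\ast\eta_\epsilon$, mollify \eqref{eq1.1} in space, pair with $u^\epsilon$ in $L^2$ and integrate over $[0,t]$. Since spatial mollification keeps $u^\epsilon$ smooth in $x$ with $\operatorname{div}u^\epsilon=0$, the map $\tau\mapsto\tfrac12\|u^\epsilon(\tau)\|_{L^2}^2$ is absolutely continuous and the pressure is annihilated, so I obtain the approximate identity
$$\frac12\|u^\epsilon(t)\|_{L^2}^2+\mu\int_0^t\!\!\int (|D(u)|^{r-2}D(u))^\epsilon:D(u^\epsilon)\,dx\,d\tau = \frac12\|u_0^\epsilon\|_{L^2}^2-\mathcal C_\epsilon,$$
where
$$\mathcal C_\epsilon=\int_0^t\!\!\int \big((u\cdot\nabla)u\big)^\epsilon\cdot u^\epsilon\,dx\,d\tau.$$
The proof reduces to passing to the limit $\epsilon\to0$ in each term.

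The kinetic terms are routine: $u_0^\epsilon\to u_0$ in $L^2$, and the weak $L^2$-continuity built into the notion of weak solution gives $u^\epsilon(t)\to u(t)$ in $L^2$, so the two boundary terms tend to $\tfrac12\|u(t)\|_{L^2}^2$ and $\tfrac12\|u_0\|_{L^2}^2$. For the dissipation I use that scalar mollification is self-adjoint and commutes with $D(\cdot)$: one integration by parts recasts the viscous term as $\mu\int_0^t\int|D(u)|^{r-2}D(u):D(u^{\epsilon\epsilon})$ with $u^{\epsilon\epsilon}=u\ast\eta_\epsilon\ast\eta_\epsilon$. Because the weak solution obeys $D(u)\in L^r(0,T;L^r)$, the stress $|D(u)|^{r-2}D(u)$ lies in the dual $L^{r'}(0,T;L^{r'})$, while $D(u^{\epsilon\epsilon})\to D(u)$ strongly in $L^r(0,T;L^r)$; the $r$--$r'$ duality then forces convergence to $\mu\int_0^t\|D(u)\|_{L^r}^r\,d\tau$, exactly the dissipation in \eqref{eq1.2}. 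This is the step the non-Newtonian structure renders delicate, and it is the reason $r$ enters the H\"older balance below.

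It remains to show $\mathcal C_\epsilon\to0$; this is where the hypotheses are spent and is the heart of the argument. Moving both mollifications onto $u^\epsilon$ and using $\operatorname{div}u=0$ to kill the diagonal term $\int(u\cdot\nabla)u\cdot u=0$, I rewrite
$$\mathcal C_\epsilon=\int_0^t\!\!\int u_j\,\partial_j u_i\,(u^{\epsilon\epsilon}_i-u_i)\,dx\,d\tau.$$
Abbreviating $s=\tfrac{1+\theta}{1+\alpha}$ and $\ell=\tfrac{6-3\theta}{3-\theta^2}$, H\"older in space with the conjugate pair $\tfrac{12-6\theta}{3-\theta},\tfrac{12-6\theta}{9-5\theta}$ followed by the very definition of the Sobolev multiplier space yields
$$\Big|\int u\cdot\nabla u\,(u^{\epsilon\epsilon}-u)\,dx\Big| \le \|u\|_{L^{\frac{12-6\theta}{3-\theta}}}\,\|\nabla u\|_{\mathcal M^3(\dot W^{s,\ell}\to L^{\frac{12-6\theta}{9-5\theta}})}\,\|u^{\epsilon\epsilon}-u\|_{\dot W^{s,\ell}}.$$
Integrating in time and applying H\"older with the exponents satisfying $\tfrac1p+\tfrac1q+\tfrac1r=1$ gives
$$|\mathcal C_\epsilon|\le \|u\|_{L^q(L^{\frac{12-6\theta}{3-\theta}})}\,\|\nabla u\|_{L^p\mathcal M^3(\dot W^{s,\ell}\to L^{\frac{12-6\theta}{9-5\theta}})}\,\|u^{\epsilon\epsilon}-u\|_{L^r(\dot W^{s,\ell})}.$$
The first two factors are finite by hypothesis; the last tends to $0$ provided $u\in L^r(0,T;\dot W^{s,\ell})$, which I obtain by interpolating the assumed $u\in L^q(L^{\frac{12-6\theta}{3-\theta}})$ against the energy regularity $u\in L^r(0,T;W^{1,r})$ (the latter from $D(u)\in L^r$ via Korn's inequality), together with the strong convergence of mollification in that fractional space.

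I expect the main obstacle to be precisely this last point: checking that the single index pair $(s,\ell)$ simultaneously makes the multiplier estimate meaningful, respects the H\"older balance $\tfrac1p+\tfrac1q+\tfrac1r=1$, and places $u$ in $L^r(0,T;\dot W^{s,\ell})$ for every admissible $(\alpha,\theta)$ with $\alpha\ge0$, $-1\le\theta<0$, $r\ge2$. A secondary care point is the domain $\Omega$: the spatial mollification and the integrations by parts must be justified up to the boundary as in \cite{BC20}. Once $\mathcal C_\epsilon\to0$ is secured, letting $\epsilon\to0$ yields \eqref{eq1.2} for almost every $t$, and the weak $L^2$-continuity of $u$ upgrades it to every $0\le t<T$.
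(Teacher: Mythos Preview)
Your scheme is sound and reaches the same conclusion, but it diverges from the paper in two places worth naming. First, the paper mollifies in \emph{time}, not space: it approximates $u$ by $u^{j}\in C^{\infty}_{0}(0,T;C^{\infty}_{0,\sigma})$ in $L^{r}(0,T;W^{1,r})\cap L^{2}(0,T;L^{2})$, convolves in $t$ with $\eta_{\varepsilon}$, and plugs $u^{j}_{\varepsilon}$ into the weak formulation of Lemma~\ref{le2.6}. Your spatial mollification is an equally legitimate Constantin--E--Titi style device, and since the setting is $\mathbb{R}^{3}$ the boundary concern you flagged is moot (and Korn is unnecessary: $u\in L^{r}(0,T;W^{1,r}_{\sigma})$ is already part of the weak-solution class).

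Second, and more substantively, in the trilinear estimate the paper distributes the three factors oppositely to you. Writing $w$ for the difference, the paper places $w$ in the $L^{q}\bigl(L^{\frac{12-6\theta}{3-\theta}}\bigr)$ slot---so $w\to0$ there is immediate from the hypothesis $u\in L^{q}(L^{\frac{12-6\theta}{3-\theta}})$---and controls the remaining copy of $u$ in $\dot W^{s,\ell}$ via Gagliardo--Nirenberg against the energy class,
\[
\|u\|_{\dot W^{s,\ell}}\lesssim\|\nabla u\|_{L^{r}}^{\vartheta}\|u\|_{L^{2}}^{1-\vartheta},\qquad \vartheta=\frac{r(-\theta+4-3\alpha\theta+2\alpha\theta^{2})}{(5r-6)(1+\alpha)(2-\theta)}\in(0,1),
\]
after which $\|\nabla u\|_{L^{r}}^{\vartheta}\le\|\nabla u\|_{L^{r}}+1$ lets the time H\"older $\tfrac1p+\tfrac1q+\tfrac1r=1$ close. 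Your allocation instead needs $u\in L^{r}\bigl(0,T;\dot W^{s,\ell}\bigr)$ so that $u^{\epsilon\epsilon}-u\to0$ there. That membership is \emph{true}, but not for the reason you state: interpolating $L^{q}(L^{\frac{12-6\theta}{3-\theta}})$ against $L^{r}(W^{1,r})$ does not in general land in $L^{r}_{t}$ (e.g.\ $r>4$, $p=2$ forces $q<r$). The correct route is precisely the paper's Gagliardo--Nirenberg step above, with $L^{\infty}(L^{2})$ as the low endpoint; since $\vartheta<1$ one gets $u\in L^{r/\vartheta}_{t}(\dot W^{s,\ell})\subset L^{r}_{t}(\dot W^{s,\ell})$ on bounded intervals, and then your argument closes. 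So the paper's choice of which factor carries the difference is the cleaner one, but yours works once the interpolation is fixed.
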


	The rest of this paper is organized as follows: In Section \ref{Sec2}, we outline some basic definitions and facts, as well as give some key lemmas. And in Section \ref{Sec3} and Section \ref{Sec4}, we give the proof of main results.   Section \ref{Sec5} is an appendix, where we give the proof of three lemmas in Section \ref{Sec3} and Section \ref{Sec4}.
	
\noindent {\bf Notation}.  ~~Throughout the paper,  $A \lesssim B$ denotes $|A|\le C|B|$ with some positive constant $C$. If $C$ depends on a parameter $\varepsilon$,  that is, $C=C(\varepsilon)$, ~$\lesssim$ is replaced by $ \lesssim_{\varepsilon}$,  and  $C(\varepsilon,t_0)$ denotes a constant depending on $\varepsilon$ and $t_0$. 	

\section{ Preliminaries}\label{Sec2}
In this section, we recall the classical Lebesgue  spaces  and Sobolev  spaces. We  then  introduce the multiplier spaces,  along with the definitions  and properties of the solutions  that will  be used in this paper. Finally, we review the Gagliardo-Nirenberg inequality and convolution, which  are frequently used  throughout  this paper.\\
The Lebesgue space $L^{p}(\mathbb{R}^{3})$ consists of all strongly measurable functions $u: \Omega\to \mathbb{R}^3$ with the norm
	\begin{equation*}
		\|u\|_{L^{p}(\mathbb{R}^{3})}:=\begin{cases}
  \left(\int_{\mathbb{R}^{3}}|u(x)|^{p}dx\right)^{\frac{1}{p}}, &1\leq p<\infty;\\
  \underset{\mathbb{R}^{3}}{\text{ess}\sup}|u(x)|, & p=+\infty.
\end{cases}
	\end{equation*}
Sobolev spaces $W^{k,p}(\mathbb{R}^{3})$ is defined as the all functions  satisfying  $$\sum_{|\alpha|\leq k}\|D^{\alpha}u\|_{L^{p}}\leq +\infty$$ equipped with norm $\|u\|_{W^{k,p}}\triangleq \sum_{|\alpha|\leq k}\|D^{\alpha}u\|_{L^{p}}$.  In particular, $W^{k,p}(\mathbb{R}^{3})$ is simply $H^{k}$ when $p=2$.
For convenience, we  denote  the norm of $L^{p}(\mathbb{R}^{3})$  by  $\|\cdot\|_{L^{q}}$ and the norm of $L^{q}(0,T;L^{p}(\mathbb{R}^{3}))$ simply by $\|\cdot\|_{L^{q}(L^{p})}$.

The space $L^{q}(0,T;~L^{p}(\mathbb{R}^{3}))$ consists of all strongly measurable functions $u: [0,T]\to  L^{p}(\mathbb{R}^{3}) $ with	
\begin{equation*}
		\|u\|_{L^{q}(0,T;L^{p}(\mathbb{R}^{3}))}
:=\begin{cases}
  \left(\int_{0}^{T}\|u(t)\|_{L^{p}(\mathbb{R}^{3})}^{q}dt\right)^{\frac{1}{q}}, & 1\leq q<\infty;\\
   \underset{0\leq t\leq T}{\text{ess}\sup}\|u(t)\|_{L^{p}(\mathbb{R}^{3})}, & q=+\infty.
  \end{cases}
\end{equation*}	

In order to give an accurate definition of the weak solution, we also need the following  notations  and function  spaces. $C^{\infty}_{0} (\mathbb{R}^{3})$ is the space of smooth functions with compact support, and we have
\begin{equation*}
		C^{\infty}_{0,\sigma} (\mathbb{R}^{3})=\{u\in C^{\infty}_0 (\mathbb{R}^{3}),\text{div} ~u=0\},
\end{equation*}
then we denote the completion of $C^{\infty}_{0,\sigma} (\mathbb{R}^{3})$ in $L^{2}(\mathbb{R}^{3})$ by $L^{2}_{\sigma}(\mathbb{R}^{3})$ and the completion in $W^{1,p}(\mathbb{R}^{3})$ by $W^{1,p}_{\sigma}(\mathbb{R}^{3})$.

\begin{definition}[Leray-Hopf weak solutions, Definition 1,~\cite{YJ19}]
		
		 Let $r\geq\frac{6}{5}$,  we say that  $u\in L^{\infty}(0,T;L^{2}_{\sigma}(\mathbb{R}^{3}))$ $\cap$  ~$L^{r}(0,T;W^{1,r}_{\sigma}(\mathbb{R}^{3}))$ is a Leray-Hopf weak solution to \eqref{eq1.1} satisfying the initial condition  $u_0\in L_\sigma^{2}(\mathbb{R}^{3})$, if the following assumptions hold:
		
		(i) $u$ is a solution of \eqref{eq1.1} in the sense of distribution
		\begin{equation*}
			\int_{0}^{T}(u,\psi_\tau)-\left(|D(u)|^{r-2}D(u),D(\psi)\right)-((u\cdot\nabla)u,\psi){\rm d}\tau=-(u_0,\psi(0)),
		\end{equation*}
		for all $\psi \in C^{\infty}_0([0,T);C^{\infty}_{0,\sigma}(\mathbb{R}^{3}))$;
		
		(ii) $u$ satisfies the energy inequality
		\begin{equation*}
			\|u(t)\|^{2}_{L^{2}}+2\int_{0}^{t}\|D (u)(\tau)\|^{r}_{L^{r}}{\rm d}\tau\leq \|u_0\|^{2}_{L^{2}},\quad t \in (0,T);
		\end{equation*}
		
		(iii) $\|u(t)-u_0\|^{2}_{L^{2}}\to 0 \text{ as } t\to 0^{+}$.
		
	\end{definition}

$r\geq\frac{2n}{n+2}$ guarantees the boundedness of $\int_{0}^{T} ((u\cdot\nabla)u,\psi){\rm d}\tau$ in  $n$-dimensional space, here we only consider the case of three dimension.
	
	\begin{lemma}[Lemma 2,~\cite{YJ19}]\label{le2.6}
	Let $r>\frac{8}{5}$, $u_0 \in L^{2}_\sigma(\mathbb{R}^{3})$, and $u$ be a weak solution of \eqref{eq1.1}. Then, after suitable redefinition of $u$ on a set of values of $t$ of one-dimensional measure zero, we have
		\begin{equation}\label{eq2.1}
	(u(t),\phi(t))	=(u_0,\phi(x,0))+\int_{0}^{t}\left[( u,\phi_\tau)-((u\cdot\nabla)u,\phi)-\left(\left|D(u)\right|^{r-2}D(u),D(\phi)\right)\right]{\rm d}\tau,
		\end{equation}
	\begin{equation*}
		u\in L^{\infty}(0,T;L^{2}_\sigma)\cap L^{r}(0,T;{W}^{1,r}_{\sigma})
	\end{equation*}
	for any $t \in [0,T)\;$and $\phi \in C^{\infty}_0(0,T;C^{\infty}_{0,\sigma}(\mathbb{R}^{3})).$
	\end{lemma}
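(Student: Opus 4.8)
The plan is to upgrade the distributional formulation of Definition~(i) to the pointwise-in-time integral identity \eqref{eq2.1} by a temporal localization, and then to exhibit a representative of $u$, continuous in the weak topology of $L^2$, that realizes the boundary value $(u(t),\phi(t))$ for every $t\in[0,T)$. Since the test functions in Definition~(i) are solenoidal, the pressure $\nabla p$ never appears, so only the convective term and the non-Newtonian stress must be controlled.

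First I would fix $\phi$ in the stated test class and, for $t\in(0,T)$ and small $\epsilon>0$, insert $\psi_\epsilon(x,\tau)=\phi(x,\tau)\,\chi_\epsilon(\tau)$ into the identity of Definition~(i), where $\chi_\epsilon\in C^\infty$ is a temporal cutoff with $\chi_\epsilon\equiv1$ on a neighbourhood of $[0,t-\epsilon]$, $\chi_\epsilon\equiv0$ on $[t,T)$, and $\chi_\epsilon'$ concentrating as a negative approximate identity near $\tau=t$. Because $\chi_\epsilon$ depends only on time, one has $D(\psi_\epsilon)=\chi_\epsilon\,D(\phi)$ and $\partial_\tau\psi_\epsilon=\chi_\epsilon\,\phi_\tau+\chi_\epsilon'\,\phi$; the product rule then splits the time-derivative term into $\int_0^T\chi_\epsilon(u,\phi_\tau)\,{\rm d}\tau\to\int_0^t(u,\phi_\tau)\,{\rm d}\tau$ and $\int_0^T\chi_\epsilon'(u,\phi)\,{\rm d}\tau\to-(u(t),\phi(t))$, the latter holding at every Lebesgue point of $\tau\mapsto(u(\tau),\phi(\tau))$, hence for a.e. $t$. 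The stress and convective terms, carrying the factor $\chi_\epsilon$, converge to their truncated integrals $\int_0^t$, while the right-hand side $-(u_0,\psi_\epsilon(0))=-(u_0,\phi(\cdot,0))$ is stationary; rearranging gives \eqref{eq2.1} for a.e. $t$, the initial contribution being consistent with property~(iii), $\|u(\tau)-u_0\|_{L^2}\to0$ as $\tau\to0^+$.

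To ensure every term is finite I would check the relevant space-time integrability. The term $(u,\phi_\tau)$ is controlled by $u\in L^\infty(0,T;L^2_\sigma)$; for the stress pairing I would use $\big\||D(u)|^{r-2}D(u)\big\|_{L^{r'}}=\|D(u)\|_{L^r}^{r-1}$, so that $|D(u)|^{r-2}D(u)\in L^{r'}(0,T;L^{r'})$ follows at once from $D(u)\in L^r(0,T;L^r)$, itself guaranteed by $u\in L^r(0,T;W^{1,r}_\sigma)$. For the convective term I would write $((u\cdot\nabla)u,\phi)=-(u\otimes u,\nabla\phi)$ and estimate $u\otimes u$ by interpolating $u\in L^\infty_tL^2_x\cap L^r_tW^{1,r}_x$ through the Gagliardo--Nirenberg inequality: the hypothesis $r>\tfrac{8}{5}=\tfrac{2(n+1)}{n+2}$ (with $n=3$) is exactly the threshold under which the resulting space-time exponent of $u$ is large enough to place the distributional time derivative $u_\tau=-\,\mathrm{div}(u\otimes u)+\mathrm{div}\big(|D(u)|^{r-2}D(u)\big)$ into an integrable negative-order class, namely $L^1\big(0,T;(W^{1,r}_\sigma)'\big)$.

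Finally, to produce the representative announced in the statement, I would note that for each fixed $w\in C^\infty_{0,\sigma}(\mathbb{R}^3)$ the scalar function $t\mapsto(u(t),w)$ has, by the identity just derived, a distributional time derivative in $L^1(0,T)$; hence it is absolutely continuous and admits a continuous representative. Combining this with the uniform bound $\|u(t)\|_{L^2}\le\|u_0\|_{L^2}$ from the energy inequality~(ii) and the separability of $L^2_\sigma$, a standard density argument yields a single $\tilde u:[0,T)\to L^2_\sigma$, continuous in the weak topology of $L^2$, that agrees with $u$ for a.e.\ $t$; redefining $u:=\tilde u$ on the exceptional null set of times makes \eqref{eq2.1} hold for every $t\in[0,T)$, while the class $u\in L^\infty(0,T;L^2_\sigma)\cap L^r(0,T;W^{1,r}_\sigma)$ is inherited unchanged from the definition of weak solution. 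The principal difficulty throughout is the non-Newtonian stress $\mathrm{div}\big(|D(u)|^{r-2}D(u)\big)$: one must confirm that its weak form genuinely lies in the dual class $L^{r'}(L^{r'})$ and that the temporal mollification commutes with this nonlinear expression, which is precisely what makes the argument more delicate than in the Newtonian case $r=2$.
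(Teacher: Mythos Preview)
The paper does not supply its own proof of this lemma: it is quoted directly as ``Lemma~2,~\cite{YJ19}'' and attributed to Yang's paper, with only a remark afterwards explaining the role of the threshold $r>\tfrac{8}{5}$. There is therefore no in-paper argument to compare against.

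Your proposal is the standard route and is essentially correct: temporal cutoff $\chi_\epsilon$ applied to $\phi$ to extract the boundary term $(u(t),\phi(t))$ at Lebesgue points, dominated convergence for the truncated integrals, and then upgrading to every $t$ via a weakly-$L^2$-continuous representative obtained from $u_\tau\in L^1\big(0,T;(W^{1,r}_\sigma)'\big)$ together with the uniform $L^2$ bound from the energy inequality. The integrability checks you list (stress in $L^{r'}_tL^{r'}_x$ via $\||D(u)|^{r-2}D(u)\|_{L^{r'}}=\|D(u)\|_{L^r}^{r-1}$, and the convective term via $(u\otimes u,\nabla\phi)$ with Gagliardo--Nirenberg interpolation under $r>\tfrac{2(n+1)}{n+2}=\tfrac{8}{5}$) are exactly the ones that make the argument go through and match the reason the paper's remark gives for the hypothesis on $r$.

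One small clarification: your closing sentence about ``temporal mollification commuting with this nonlinear expression'' is not part of this lemma---no mollification in time appears here. That issue belongs to the proof of Theorem~\ref{th1.1}, where the paper mollifies the \emph{test function} $u^{j}$ in time rather than the stress; the nonlinear term $|D(u)|^{r-2}D(u)$ is never mollified, so no commutation problem arises. In the present lemma you only need the dual-space membership $|D(u)|^{r-2}D(u)\in L^{r'}(0,T;L^{r'})$, which you have already established.
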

  \begin{remark}
    Wolf \cite{WJ07} proved the existence of weak solutions to non-Newtonian  fluid equations \eqref{eq1.1}  in $n$-dimensional space for $r>\frac{2(n+1)}{n+2}$. Hence, the assumption $r>\frac{8}{5}$ is necessary in $\mathbb{R}^{3}$.
  \end{remark}

\begin{definition}[\cite{MS09}]\label{de2.1}
    Let $E$ and $F$ be two Banach spaces embedded in $\mathcal{S}'(\mathbb{R}^{3})$. The multiplier space $\mathcal{M}(E\to F)$ is the set of  all  functions $\varphi$ such that $\varphi u\in F$ for every $u\in E$, with the norm defined by
    \begin{equation*}
        \|\varphi\|_{\mathcal{M}(E\to F)}:=\sup_{u\in E\setminus\{0\}}\frac{\|\varphi u\|_F}{\|u\|_E}<\infty.
    \end{equation*}
    Moreover, if $\varphi$ is a  $d$-dimensional  vector-valued function, i.e., $\varphi=(\varphi^{1},\dots,\varphi^{d})\in\mathcal{M}(E\to F)$,  the corresponding norm is given by
    \begin{equation*}
        \|\varphi\|_{\mathcal{M}^{d}(E\to F)}^{2}:=\sum_{1\le i\le d}\|\varphi^{i}\|_{\mathcal{M}(E\to F)}^{2}.
    \end{equation*}
    It is easy to observe that there exists an equivalence relation between multiplier spaces and Lebesgue spaces. For any $q\in (1,\infty]$, we have $L^{q}(\Omega)=(L^{p}(\Omega))'$ where $\Omega\subset\mathbb{R}^{3}$ is a domain, i.e.
    \begin{equation*}
        L^{q}(\Omega)=\mathcal{M}(L^{p}(\Omega)\to L^{1}(\Omega))
    \end{equation*}
    with $\frac{1}{p}+\frac{1}{q}=1$.
\end{definition}
		
\begin{remark}[\cite{CZ23}]\label{re2.2}
		 For $\mathcal{M}^{3}(\dot{H}^{\frac{\theta}{\alpha+\theta}}\to L^{\frac{r(\alpha+\theta)}{r-1}})$, according to Sobolev embedding, we easily get $\dot{H}^{\frac{\theta}{\alpha+\theta}}\hookrightarrow L^{\frac{6(\alpha+\theta)}{3\alpha +\theta }}$, since $\frac{1}{\frac{r(\alpha+\theta)}{r-1}}=\frac{1}{\frac{6(\alpha+\theta)}{3\alpha +\theta }}+\frac{1}{\frac{6r(\alpha+\theta)}{6r-6-3\alpha r-\theta  r}}$, we gain
		\begin{equation*}
			L^{\frac{6r(\alpha+\theta)}{6r-6-3\alpha r-\theta  r}}\hookrightarrow\mathcal{M}^{3}(\dot{H}^{\frac{\theta}{\alpha+\theta}}\to L^{\frac{r(\alpha+\theta)}{r-1}}),
		\end{equation*}
		for any $r\geq2$, which guarantees $\frac{6r(\alpha+\theta)}{6r-6-3\alpha r-\theta  r}>0$.
\end{remark}
In order to derive an initial integral equation by constructing a test function from the weak solution of the equation, we first recall some properties of the standard mollifier.	
Let $\eta$ be a standard mollifier (\S.5, p713, \cite{ELC98}), that is, $\eta(t)=C\exp\left(\frac{1}{|t|^{2}-1}\right)\chi_{\{|t|\leq 1\}}$, where the constant $C\geq 0$ selected to integrate to unity and $\chi_{\{|t|\leq 1\}}$ is the indicator function of $\{|t|\leq 1\}$.  For any $\varepsilon>0$,  we set the  rescaled  mollifier $\eta_{\varepsilon}(t):=\frac{1}{\varepsilon}\eta(\frac{t}{\varepsilon})$, then $\int_{\mathbb{R}}\eta_{\varepsilon}(t){\rm d}t=1$.   For any function $f\in L^{1}_{loc}$, define its mollification as
\begin{equation*}
	f_{\varepsilon}(t)=(\eta_{\varepsilon}*f)(t)=\int_{-\varepsilon}^{\varepsilon}\eta_\varepsilon(t-\sigma)f(\sigma){\rm d}\sigma.
\end{equation*}

\begin{lemma}[Gagliardo-Nirenberg inequality,  ~Lemma 2.2, ~\cite{Wu24} or \cite{Nirenberg11}]\label{le2.3}
		Let $1\leq q,s<\infty$ and $m\leq k$.
		Suppose that $j$ and $\vartheta$ satisfy $m\leq j\leq k$, $0\leq \vartheta\leq 1$ and define $p\in [1,+\infty]$ by
		
		\begin{equation*}
			j- \frac{3}{p}=\left(m-\frac{3}{s}\right)\vartheta+\left(k-\frac{3}{q}\right)(1-\vartheta).
		\end{equation*}
		Then the inequality holds:
		\begin{equation*}
			\|\nabla^{j} u\|_{L^{p}}\leq C	\|\nabla^{m}u\|_{L^{s}}^{\vartheta}\|\nabla^{k} u\|_{L^{q}}^{1-\vartheta}, ~ u\in W^{m,s}(\mathbb{R}^{3})\cap W^{k,q}(\mathbb{R}^{3}),
		\end{equation*}
		where constant $C\geq 0$. Here, when $p=\infty$, we require that $0<\vartheta<1$.
	\end{lemma}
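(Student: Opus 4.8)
The inequality is scale-invariant, and the plan is to reduce it to two elementary building blocks and then recombine them. First I would record the scaling $u_\lambda(x)=u(\lambda x)$, under which $\|\nabla^j u_\lambda\|_{L^p}=\lambda^{j-3/p}\|\nabla^j u\|_{L^p}$ (here $n=3$); the hypothesis $j-\frac{3}{p}=(m-\frac{3}{s})\vartheta+(k-\frac{3}{q})(1-\vartheta)$ is precisely the requirement that both sides of the claimed estimate transform with the same power of $\lambda$, which explains why the exponent relation is forced. Next I would reduce to the case $m=0$: setting $w=\nabla^m u$ and writing $j'=j-m$, $k'=k-m$, the desired bound becomes $\|\nabla^{j'}w\|_{L^p}\le C\|w\|_{L^s}^{\vartheta}\|\nabla^{k'}w\|_{L^q}^{1-\vartheta}$ with $0\le j'\le k'$, and a direct computation shows that the balance identity is preserved under this shift (subtract $m$ from both sides of the hypothesis). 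By density it then suffices to prove the estimate for $u\in C_c^\infty(\mathbb{R}^3)$ and to pass to the limit on $W^{m,s}\cap W^{k,q}$.

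The first building block is the Gagliardo--Nirenberg--Sobolev inequality: for $1\le a<3$, $\|f\|_{L^{a^*}}\le C\|\nabla f\|_{L^a}$ with $\frac{1}{a^*}=\frac1a-\frac13$. For $a=1$ I would use the pointwise bound $|f(x)|\le\int_{\mathbb{R}}|\partial_i f|\,\mathrm{d}x_i$ in each of the three coordinate directions, multiply the three estimates, and integrate successively while applying the generalized H\"older inequality (the Loomis--Whitney argument) to obtain $\|f\|_{L^{3/2}}\le C\prod_{i=1}^{3}\|\partial_i f\|_{L^1}^{1/3}\le C\|\nabla f\|_{L^1}$; the range $1<a<3$ then follows by applying the $a=1$ case to $|f|^{\gamma}$ for a suitable $\gamma(a)>1$, together with H\"older applied to $\nabla(|f|^{\gamma})=\gamma|f|^{\gamma-1}\nabla f$.

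The second building block is the same-exponent interpolation of intermediate derivatives,
\[
\|\nabla^{i}w\|_{L^{P}}\le C\|w\|_{L^{P}}^{1-i/\ell}\|\nabla^{\ell}w\|_{L^{P}}^{i/\ell},\qquad 0\le i\le \ell,
\]
which I would derive from the convexity estimate $\|\nabla w\|_{L^{P}}^{2}\le C\|w\|_{L^{P}}\|\nabla^{2}w\|_{L^{P}}$ (obtained by integrating $\int|\partial_i w|^{P}$ by parts once and applying H\"older) and then iterating in $\ell$. With both blocks available, the $m=0$ case is assembled by choosing an intermediate Lebesgue exponent $P$ and an integer order, using the Sobolev block to adjust the integrability of the two outer quantities so as to reach $s$ and $q$, using the derivative-interpolation block to install the order $j'$ lying between $0$ and $k'$, and finally splitting the resulting product with H\"older's inequality so that the two factors carry the exponents $\vartheta$ and $1-\vartheta$; the balance identity is exactly what makes the exponent arithmetic close.

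The endpoint $p=\infty$ must be treated separately --- this is where the hypothesis $0<\vartheta<1$ is needed --- by invoking the borderline Morrey embedding to produce the $L^\infty$ bound, since no single scale-critical estimate delivers it. The main obstacle, however, is the bookkeeping in the final recombination of the $m=0$ case: one must exhibit, for every admissible tuple $(j',k',p,s,q,\vartheta)$ in the stated range (including the degenerate endpoints $\vartheta\in\{0,1\}$), a single consistent choice of intermediate exponent and order that simultaneously respects $0\le j'\le k'$, matches the prescribed integrabilities, and reproduces the balance identity. Verifying that this exponent system is always solvable --- rather than any one individual inequality --- is the crux, and Nirenberg's original argument accomplishes it through a finite case split according to the sign of the relevant intermediate parameters, which I would follow.
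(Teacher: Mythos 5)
The first thing to note is that the paper does not prove this lemma at all: it is imported verbatim as a known result, with the citation to \cite{Wu24} and to Nirenberg's original paper \cite{Nirenberg11}, and the paper's appendix only proves Lemmas \ref{le2.7}--\ref{le2.9}, which merely \emph{apply} this inequality. So there is no internal argument to compare against; your sketch is in effect a reconstruction of the classical proof that the paper's cited sources contain. As such a reconstruction it is sound in outline: the scaling check that forces the balance identity, the reduction to $m=0$ via $w=\nabla^m u$, the Loomis--Whitney/Sobolev block for $\|f\|_{L^{3/2}}\le C\|\nabla f\|_{L^1}$ with the $|f|^{\gamma}$ trick for $1<a<3$, the same-exponent interpolation of intermediate derivatives, and the Morrey treatment of $p=\infty$ (which is exactly where the paper's restriction $0<\vartheta<1$ enters, and which also quietly disposes of Nirenberg's exceptional case $k-j-3/q\in\mathbb{N}\cup\{0\}$, since with $\vartheta=0$ that case forces $p=\infty$) are the standard ingredients. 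Two caveats: your derivation of the convexity estimate $\|\nabla w\|_{L^{P}}^{2}\le C\|w\|_{L^{P}}\|\nabla^{2}w\|_{L^{P}}$ by a single integration by parts plus H\"older only works as written for $P\ge 2$ (the conjugate exponent $P/(P-2)$ must make sense), so the range $1\le P<2$ and the endpoints need a separate argument; and the final recombination --- which you yourself flag as the crux --- is not actually carried out but deferred to Nirenberg's case split, so what you have is an honest proof strategy rather than a complete proof. Given that the paper itself treats the lemma as a citation, that level of completeness is a reasonable match for the role the statement plays here, but the recombination step is the one place where your write-up is a plan rather than an argument.
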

	
	\begin{lemma}\label{le2.4}
      Let $m=1$, $j=\frac{\theta}{\alpha+\theta}$, $k=0$, $p=q=2,~s=r$, and $\vartheta=\frac{2r\theta}{(\alpha+\theta)(5r-6)}$ in Lemma \ref{le2.3}, and according to Remark \ref{re2.2} we get the estimate
		\begin{eqnarray*}
    \||u|^{2}\|_{L^{\frac{r}{r-1}}} &=& \||u|^{\frac{2}{\alpha+\theta}} \|^{\alpha+\theta}_{L^{\frac{r(\alpha+\theta)}{r-1}}}=\||u|^{\frac{2}{\alpha+\theta}-2}u\cdot u\|^{\alpha+\theta}_{L^{\frac{r(\alpha+\theta)}{r-1}}} \\
                               &\leq& \||u|^{\frac{2}{\alpha+\theta}-2}u\|^{\alpha+\theta}_{\mathcal{M}^{3}(\dot{H}^{\frac{\theta}{\alpha+\theta}}\to L^{\frac{r(\alpha+\theta)}{r-1}})}\|u\|^{\alpha+\theta}_{\dot{H}^{\frac{\theta}{\alpha+\theta}}} \\
                                &\leq& \||u|^{\frac{2}{\alpha+\theta}-2}u\|^{\alpha+\theta}_{\mathcal{M}^{3}(\dot{H}^{\frac{\theta}{\alpha+\theta}}\to L^{\frac{r(\alpha+\theta)}{r-1}})}\|\nabla u\|^{\frac{2r\theta}{5r-6}}_{L^{r}}\| u\|^{\frac{\alpha(5r-6)+3\theta(r-2)}{5r-6}}_{L^{2}}
                                \end{eqnarray*}
       with $r\geq2$.
	\end{lemma}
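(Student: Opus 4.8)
The plan is to read the displayed chain as two elementary identities followed by two genuine estimates, and to justify each link in turn. The two equalities are purely algebraic. For the first, I would use the homogeneity of the Lebesgue norm under powers, namely $\|g^{\beta}\|_{L^{b}}=\|g\|_{L^{\beta b}}^{\beta}$: writing $|u|^{2}=\bigl(|u|^{\frac{2}{\alpha+\theta}}\bigr)^{\alpha+\theta}$ with $\beta=\alpha+\theta$ and $b=\frac{r}{r-1}$ turns $\||u|^{2}\|_{L^{r/(r-1)}}$ into $\||u|^{\frac{2}{\alpha+\theta}}\|_{L^{r(\alpha+\theta)/(r-1)}}^{\alpha+\theta}$; a direct check of the inner and outer exponents confirms that both sides reduce to $\bigl(\int|u|^{\frac{2r}{r-1}}\bigr)^{\frac{r-1}{r}}$. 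The second equality is the pointwise identity $|u|^{\frac{2}{\alpha+\theta}-2}\,(u\cdot u)=|u|^{\frac{2}{\alpha+\theta}-2}|u|^{2}=|u|^{\frac{2}{\alpha+\theta}}$, requiring nothing beyond $u\cdot u=|u|^{2}$.

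For the first inequality I would invoke the multiplier-space Definition \ref{de2.1}, regarding $\varphi:=|u|^{\frac{2}{\alpha+\theta}-2}u$ as the multiplier acting on the factor $u\in\dot H^{\frac{\theta}{\alpha+\theta}}$, with $F=L^{\frac{r(\alpha+\theta)}{r-1}}$ and $E=\dot H^{\frac{\theta}{\alpha+\theta}}$ (Remark \ref{re2.2} guaranteeing that this multiplier space is the correct ambient framework). Componentwise one has $\|\varphi^{i}u^{i}\|_{F}\le\|\varphi^{i}\|_{\mathcal M(E\to F)}\|u^{i}\|_{E}$; summing over $i$, applying Cauchy--Schwarz, and using that the vector norms $\|\cdot\|_{\mathcal M^{3}}$ and $\|\cdot\|_{\dot H^{s}}$ are defined as $\ell^{2}$-sums of their components yields $\|\varphi\cdot u\|_{F}\le\|\varphi\|_{\mathcal M^{3}(E\to F)}\|u\|_{E}$; raising to the power $\alpha+\theta$ gives the stated bound.

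The substance of the lemma is the last inequality, where I would apply the Gagliardo--Nirenberg inequality (Lemma \ref{le2.3}) with the parameters listed in the statement. The key check is the dimensional balance $j-\frac3p=(m-\frac3s)\vartheta+(k-\frac3q)(1-\vartheta)$, which with $m=1$, $j=\frac{\theta}{\alpha+\theta}$, $k=0$, $p=q=2$, $s=r$ forces $\frac{\theta}{\alpha+\theta}=\vartheta\cdot\frac{5r-6}{2r}$ and hence pins down $\vartheta=\frac{2r\theta}{(\alpha+\theta)(5r-6)}$ exactly as claimed. This gives $\|u\|_{\dot H^{\theta/(\alpha+\theta)}}\lesssim\|\nabla u\|_{L^{r}}^{\vartheta}\|u\|_{L^{2}}^{1-\vartheta}$; raising to the power $\alpha+\theta$ and simplifying the exponents $(\alpha+\theta)\vartheta=\frac{2r\theta}{5r-6}$ and $(\alpha+\theta)(1-\vartheta)=\frac{\alpha(5r-6)+3\theta(r-2)}{5r-6}$ reproduces the final line.

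I expect the main obstacle to be bookkeeping rather than depth: one must verify that the hypotheses $\alpha,\theta\ge0$, $\frac12\le\alpha+\theta\le1$ and $r\ge2$ indeed place $\vartheta$ in the admissible range $[0,1]$ required by Lemma \ref{le2.3}. Since $\frac{\theta}{\alpha+\theta}\le1$, one has $\vartheta\le\frac{2r}{5r-6}$, and $\frac{2r}{5r-6}\le1$ is equivalent to $r\ge2$, so the constraint is met. A minor point to flag is the ordering convention in Lemma \ref{le2.3}: here the two endpoint orders $m=1$ and $k=0$ appear with the larger one labelled $m$, so the intermediate index satisfies $k\le j\le m$ rather than $m\le j\le k$; this interchange of roles is harmless in the Gagliardo--Nirenberg framework and does not affect the estimate.
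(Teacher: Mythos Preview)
Your proposal is correct and follows exactly the route the paper takes: the lemma's displayed chain is its own proof, and you have simply unpacked each link—the algebraic rewriting, the multiplier bound from Definition~\ref{de2.1}, and the Gagliardo--Nirenberg interpolation from Lemma~\ref{le2.3} with the listed parameters. Your additional verification that $\vartheta\in[0,1]$ under $r\ge2$ and your remark on the $m\leftrightarrow k$ ordering in Lemma~\ref{le2.3} are useful sanity checks that the paper leaves implicit.
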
	
	
\section{Proof of Theorem \ref{th1.1}} \label{Sec3}
Under the assumptions of Theorem \ref{th1.1}, in order to estimate the advection term in \eqref{eq1.1}, we need the following lemma:
 \begin{lemma}\label{le2.7}
 	For $u \text{~in~} L^{\infty}(0,T;L^{2}_{\sigma}(\mathbb{R}^{3}))\cap L^{r}(0,T;W^{1,r}_{\sigma}(\mathbb{R}^{3}))$ and  $|u|^{\frac{2}{\alpha+\theta}-2}u$   in\\  $L^{\frac{r(5r-6)(\alpha +\theta)}{5r^{2}-(11+2\theta)r+6}}\mathcal{M}^{3}(\dot{H}^{\frac{\theta}{\alpha+\theta}}\to L^{\frac{r(\alpha+\theta)}{r-1}})$, where $\alpha,\theta\ge0$, $\frac{1}{2}\le\alpha+\theta\le1$ and $r\geq2$, by Lemma \ref{le2.4} we have	
 	\begin{eqnarray*}
 		& &\left|\int_{0}^{t_0}((u\cdot\nabla)u,u){\rm d}\sigma \right|\\
 		&\lesssim& \||u|^{\frac{2}{\alpha+\theta}-2}u\|^{\alpha+\theta}_{L^{\frac{r(5r-6)(\alpha +\theta)}{5r^{2}-(11+2\theta)r+6}}\mathcal{M}^{3}(\dot{H}^{\frac{\theta}{\alpha+\theta}}\to L^{\frac{r(\alpha+\theta)}{r-1}})}\|\nabla u\|^{\frac{2r\theta}{5r-6}+1}_{L^{r}(L^{r})}\|u\|^{\frac{\alpha(5r-6)+3\theta(r-2)}{5r-6}}_{L^{\infty}(L^{2})}.
 	\end{eqnarray*}
 \end{lemma}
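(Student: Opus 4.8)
The plan is to reduce everything to a single application of Hölder's inequality in time, after first controlling the trilinear advection term pointwise in $\sigma$ by the product furnished by Lemma \ref{le2.4}. The two conjugate time exponents will be tuned so that the $\nabla u$ factor is absorbed into the natural energy space $L^r(0,T;L^r)$, which is precisely what forces the peculiar integrability index in the hypothesis.

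First I would estimate the integrand in space. From the componentwise Cauchy--Schwarz inequality one has the pointwise bound $|(u\cdot\nabla)u\cdot u|\le |u|^2|\nabla u|$, so Hölder's inequality with the conjugate pair $\big(\tfrac{r}{r-1},\,r\big)$ yields
$$\big|((u\cdot\nabla)u,u)\big|\le \int_{\mathbb{R}^3}|u|^2|\nabla u|\,{\rm d}x\le \big\||u|^2\big\|_{L^{\frac{r}{r-1}}}\|\nabla u\|_{L^r}.$$
I would then insert the estimate for $\||u|^2\|_{L^{r/(r-1)}}$ supplied by Lemma \ref{le2.4}, replacing that factor by $\||u|^{\frac{2}{\alpha+\theta}-2}u\|^{\alpha+\theta}_{\mathcal{M}^3(\dot H^{\theta/(\alpha+\theta)}\to L^{r(\alpha+\theta)/(r-1)})}\,\|\nabla u\|^{\frac{2r\theta}{5r-6}}_{L^r}\,\|u\|^{\frac{\alpha(5r-6)+3\theta(r-2)}{5r-6}}_{L^2}$. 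Since $u\in L^\infty(0,T;L^2_\sigma)$, the factor $\|u\|_{L^2}^{\frac{\alpha(5r-6)+3\theta(r-2)}{5r-6}}$ is bounded for a.e.\ $\sigma$ by the corresponding power of $\|u\|_{L^\infty(L^2)}$ and may be pulled outside the time integral.

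The remaining step is to integrate over $[0,t_0]$ the product of $\||u|^{\frac{2}{\alpha+\theta}-2}u\|^{\alpha+\theta}_{\mathcal{M}^3(\cdots)}$ and $\|\nabla u\|^{\frac{2r\theta}{5r-6}+1}_{L^r}=\|\nabla u\|^{\frac{2r\theta+5r-6}{5r-6}}_{L^r}$. Here I would apply Hölder in time with exponents $P_1=\frac{r(5r-6)}{5r^2-(11+2\theta)r+6}$ and $P_2=\frac{r(5r-6)}{2r\theta+5r-6}$. One checks directly that $\frac1{P_1}+\frac1{P_2}=\frac{5r^2-(11+2\theta)r+6+2r\theta+5r-6}{r(5r-6)}=\frac{5r^2-6r}{r(5r-6)}=1$, so the pair is admissible; moreover $(\alpha+\theta)P_1=\frac{r(5r-6)(\alpha+\theta)}{5r^2-(11+2\theta)r+6}$ is exactly the Bochner index demanded in the hypothesis, while $\frac{2r\theta+5r-6}{5r-6}\cdot P_2=r$ turns the second factor into $\|\nabla u\|_{L^r(L^r)}^{(2r\theta+5r-6)/(5r-6)}=\|\nabla u\|_{L^r(L^r)}^{2r\theta/(5r-6)}\,\|\nabla u\|_{L^r(L^r)}$. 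Collecting the factors gives precisely the claimed estimate.

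The computations are elementary once Lemma \ref{le2.4} is in hand, so the only genuine obstacle is the exponent bookkeeping: the two time exponents $P_1,P_2$ must simultaneously be Hölder-conjugate and drive the $\nabla u$ factor into $L^r(0,T;L^r)$, and this rigid pair of constraints is exactly what determines $P_1$ and hence the time-integrability required of $|u|^{\frac{2}{\alpha+\theta}-2}u$. I would also record that each quantity entering as an exponent is nonnegative under the standing assumptions $\alpha,\theta\ge0$, $\tfrac12\le\alpha+\theta\le1$, $r\ge2$ (in particular $5r-6\ge4>0$, so $2r\theta+5r-6>0$ and $\alpha(5r-6)+3\theta(r-2)\ge0$, while $5r^2-(11+2\theta)r+6>0$ in the relevant parameter range), which ensures the resulting bound is meaningful and finite.
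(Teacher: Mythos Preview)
Your proof is correct and follows essentially the same route as the paper: a spatial H\"older with exponents $(r/(r-1),r)$, then Lemma \ref{le2.4} (multiplier bound plus Gagliardo--Nirenberg) to control $\||u|^2\|_{L^{r/(r-1)}}$, followed by H\"older in time. The only cosmetic difference is that the paper keeps the two $\|\nabla u\|_{L^r}$ factors separate and uses a three-exponent H\"older in time with $1=\frac{5r^2-(11+2\theta)r+6}{r(5r-6)}+\frac{2\theta}{5r-6}+\frac{1}{r}$, whereas you merge them into $\|\nabla u\|_{L^r}^{(2r\theta+5r-6)/(5r-6)}$ and apply a two-exponent H\"older; the two are equivalent.
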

Since the proof of Lemma \ref{le2.7} relies only on tedious calculations, along with the Gagliardo-Nirenberg inequality and the H"{o}lder inequality, we will present the detailed  proof in the appendix.

\begin{proof}[Proof of Theorem \ref{th1.1}]	
We can choose a sequence of $u^{j}\in C^{\infty}_0(0,T;C^{\infty}_{0,\sigma}(\mathbb{R}^{3}))$ converging to $u$ in $ L^{2}(0,T;L^{2}_\sigma(\mathbb{R}^{3}))\cap L^{r}(0,T;W^{1,r}_{\sigma}(\mathbb{R}^{3}))$, which is based on the fact that $C^{\infty}_{0,\sigma}(\mathbb{R}^{3})~$is dense in $L^{2}_\sigma(\mathbb{R}^{3})\cap W^{1,r}_{\sigma}(\mathbb{R}^{3})$, then we have
\begin{equation*}
	u^{j}_\varepsilon(t)=(\eta_\varepsilon*u^{j})(t)=\int_{0}^{t_0}\eta_\varepsilon(t-\sigma)u^{j}(\sigma){\rm d}\sigma
\end{equation*} for any fixed $t_0<T$.

Taking $u^{j}_\varepsilon\;$as test function in \eqref{eq2.1} and replacing $t\;$with fixed $t_0$, we get
\begin{eqnarray}\label{eq3.1}
	& &\left(u(t_0),u^{j}_\varepsilon(t_0)\right)\nonumber\\
	&=&\int_{0}^{t_0}\eta_\varepsilon(t_0-\sigma)(u(t_0),u^{j}(\sigma)){\rm d}\sigma \nonumber\\
	&=&\int_{0}^{t_0}\eta_\varepsilon(-\sigma)(u_0,u^{j}(\sigma)){\rm d}\sigma \nonumber\\
	& &-\int_{0}^{t_0}\int_{0}^{t_0}\eta_\varepsilon(\tau-\sigma)\left[((u\cdot\nabla)u(\tau),u^{j}(\sigma)) +(|D(u)|^{r-2}D(u),D(u^{j}))\right]{\rm d}\sigma {\rm d}\tau\nonumber\\
	& & +\int_{0}^{t_0}\int_{0}^{t_0}\frac{d}{d\tau}[\eta_\varepsilon(\tau-\sigma)](u(\tau),u^{j}(\sigma)){\rm d}\sigma {\rm d}\tau
\end{eqnarray}
Letting $j$ to infinity in \eqref{eq3.1} for fixed $\varepsilon$,  we next provide a detailed estimate of the boundedness of the linear terms in \eqref{eq3.1}.
\begin{eqnarray*}
	& & 	\left|\int_{0}^{t_0}\eta_\varepsilon(t_0-\sigma)(u(t_0),u^{j}(\sigma)-u(\sigma)){\rm d}\sigma\right| \\
	&\leq~~~& \int_{0}^{t_0}\eta_\varepsilon(t_0-\sigma)\|u(t_0)\|_{L^{2}}\|u^{j}(\sigma)-u(\sigma)\|_{L^{2}}{\rm d}\sigma\quad(\text{ H\"{o}lder's inequality})\\
	&\lesssim_{\varepsilon,t_0}&  \|u\|_{L^{\infty}(L^{2})}\|u^{j}-u\|_{L^{2}(L^{2})}
\end{eqnarray*}

$\to0\quad as~j\to\infty,$
we acquire
\begin{equation*}
	\int_{0}^{t_0}\eta_\varepsilon(t_0-\sigma)(u(t_0),u^{j}(\sigma)){\rm d}\sigma\to\;\int_{0}^{t_0}\eta_\varepsilon(t_0-\sigma)(u(t_0),u(\sigma)){\rm d}\sigma~as~j\to \infty.
\end{equation*}
Similarly, we have
\begin{equation*}
	\int_{0}^{t_0}\eta_\varepsilon(-\sigma)(u_0,u^{j}(\sigma)){\rm d}\sigma\to\;\int_{0}^{t_0}\eta_\varepsilon(-\sigma)(u_0,u(\sigma)){\rm d}\sigma~as~j\to \infty,
\end{equation*}
\begin{equation*}
	\int_{0}^{t_0}\int_{0}^{t_0}\frac{d}{d\tau}[\eta_\varepsilon(\tau-\sigma)](u(\tau),u^{j}(\sigma)){\rm d}\sigma {\rm d}\tau\to\;\int_{0}^{t_0}\int_{0}^{t_0}\frac{d}{d\tau}[\eta_\varepsilon(\tau-\sigma)](u(\tau),u(\sigma)){\rm d}\sigma {\rm d}\tau
\end{equation*}
as $j\to\;\infty$.

Finally, we obtain
\begin{eqnarray}\label{eq3.2}
	& &\int_{0}^{t_0}\eta_\varepsilon(t_0-\sigma)(u(t_0),u(\sigma)){\rm d}\sigma \nonumber\\
	&=&\int_{0}^{t_0}\eta_\varepsilon(-\sigma)(u_0,u(\sigma)){\rm d}\sigma \nonumber\\
	& &-\int_{0}^{t_0}\int_{0}^{t_0}\eta_\varepsilon(\tau-\sigma)\left[((u\cdot\nabla)u(\tau),u(\sigma)) +(|D(u)|^{r-2}D(u),D(u)(\sigma))\right]{\rm d}\sigma {\rm d}\tau\nonumber\\
	& & +\int_{0}^{t_0}\int_{0}^{t_0}\frac{d}{d\tau}[\eta_\varepsilon(\tau-\sigma)](u(\tau),u(\sigma)){\rm d}\sigma {\rm d}\tau
\end{eqnarray}

Sending $\varepsilon$ to zero in \eqref{eq3.2}, and $\varepsilon<t_0$. For the term on the left of \eqref{eq3.2}, we have
\begin{equation*}
	\int_{0}^{t_0}\eta_\varepsilon(t_0-\sigma)(u(t_0),u(\sigma)){\rm d}\sigma
	=\int_{0}^{\varepsilon}\eta_\varepsilon(\tau)(u(t_0),u(t_0-\tau)){\rm d}\tau.
\end{equation*}
Since $u$ is weakly continuous in the sense of $L^{2}$, $W$ is weakly continuous in the sense of $L^{2}$, then
\begin{equation*}
	(u(t_0),u(t_0-\tau))=\|u(t_0)\|^{2}_{L^{2}}+\beta(\tau),
\end{equation*}
where $\beta(\tau)\to\;0$ as $\sigma\to\;0$. Hence, as $\varepsilon\to\;0$ and mollifier is even, we gain
\begin{equation*}
	\int_{0}^{t_0}\eta_\varepsilon(t_0-\sigma)(u(t_0),u(\sigma)){\rm d}\sigma=\int_{0}^{\varepsilon}\eta_\varepsilon(\tau)(\|u(t_0)\|^{2}_{L^{2}}+\beta(\tau)){\rm d}\tau\to\;\frac{1}{2}\|u(t_0)\|^{2}_{L^{2}}
\end{equation*}
\begin{equation*}
	\int_{0}^{t_0}\eta_\varepsilon(-\sigma)(u_0,u(\sigma)){\rm d}\sigma\to\;\frac{1}{2}\|u_0\|^{2}_{L^{2}}
\end{equation*}as $\varepsilon\to 0$.\\

For the last term in \eqref{eq3.2}, since $\eta_\varepsilon$ is even, we have
\begin{eqnarray*}
	& & \int_{0}^{t_0}\int_{0}^{t_0}\frac{d}{d\tau}[\eta_\varepsilon(\tau-\sigma)](u(\tau),u(\sigma)){\rm d}\sigma {\rm d}\tau \\
	&=& -\int_{0}^{t_0}\int_{0}^{t_0}\frac{d}{d\sigma}[\eta_\varepsilon(\tau-\sigma)](u(\tau),u(\sigma)){\rm d}\sigma {\rm d}\tau \\
	&=& -\int_{0}^{t_0}\int_{0}^{t_0}\frac{d}{d\sigma}[\eta_\varepsilon(\sigma-\tau)](u(\tau),u(\sigma)){\rm d}\sigma {\rm d}\tau \\
	&=& -\int_{0}^{t_0}\int_{0}^{t_0}\frac{d}{d\sigma}[\eta_\varepsilon(\sigma-\tau)](u(\sigma),u(\tau)){\rm d}\tau {\rm d}\sigma \\
	&=& -\int_{0}^{t_0}\int_{0}^{t_0}\frac{d}{d\tau}[\eta_\varepsilon(\tau-\sigma)](u(\tau),u(\sigma)){\rm d}\sigma {\rm d}\tau.
\end{eqnarray*}
Thus,
\begin{equation*}
	\int_{0}^{t_0}\int_{0}^{t_0}\frac{d}{d\tau}[\eta_\varepsilon(\tau-\sigma)](u(\tau),u(\sigma)){\rm d}\sigma {\rm d}\tau=0.
\end{equation*}

 The difficulty lies in estimating the nonlinear terms.  Here we present the estimate for the nonlinear terms $|D(u)|^{r-2}D(u)$ and $(u\cdot\nabla)u$.

	Before estimating the  term $\left(|D(u)|^{r-2}D(u),D(u)\right)$, we  first note that  the following fact:
	\begin{equation*}
		\|D(u)\|_{L^r}\leq\frac{1}{2}\left(\|\nabla u\|_{L^r}+\|(\nabla u)^{T}\|_{L^r}\right)=\|\nabla u\|_{L^r},
	\end{equation*}
	
	then we can naturally get
	\begin{eqnarray*}
		& & \left|\int_{0}^{t_0}\int_{0}^{t_0}\eta_\varepsilon(\tau-\sigma)\left( |D(u)|^{r-2}D(u),D(u^{j})-D(u)\right){\rm d}\sigma {\rm d}\tau\right|\\
		&\leq& C(\varepsilon)\left|\int_{0}^{t_0}\int_{0}^{t_0}\||D(u)|^{r-1}(\tau)\|_{L^{\frac{r}{r-1}}}\|D(u^{j}-u)(\sigma)\|_{L^{r}}{\rm d}\sigma {\rm d}\tau\right| ~(\text{H\"{o}lder's inequality})\\
		&=&	C(\varepsilon)\int_{0}^{t_0}\int_{0}^{t_0}\||D(u)|(\tau)\|^{r-1}_{L^{r}}\|D(u^{j}-u)(\sigma)\|_{L^{r}}\;{\rm d}\sigma {\rm d}\tau \\
		&\leq&C(\varepsilon)\int_{0}^{t_0}\|\nabla u(\tau)\|^{r-1}_{L^{r}}{\rm d}\tau\int_{0}^{t_0}\|\nabla(u^{j}-u)(\sigma)\|_{L^{r}}\;{\rm d}\sigma  \\
		&\leq& C(\varepsilon,t_0)\|\nabla u\|_{L^{r}(L^{r})}\|\|\nabla(u^{j}-u)\|_{L^{r}(L^{r})} \quad(\text{ H\"{o}lder's inequality})\\
        &\to&0
	\end{eqnarray*}
	as $j~\to \infty $, where the exponents of  H\"{o}lder's inequality are expressed as
	\begin{equation*}
		1=\frac{1}{\frac{r}{r-1}}+\frac{1}{r}.
	\end{equation*}
	Thus,
	\begin{eqnarray*}
		& & \int_{0}^{t_0}\int_{0}^{t_0}\eta_\varepsilon(\tau-\sigma)\left( |D(u)|^{r-2}D(u),D(u^{j})\right){\rm d}\sigma {\rm d}\tau \\
		&\to& \int_{0}^{t_0}\int_{0}^{t_0}\eta_\varepsilon(\tau-\sigma)\left( |D(u)|^{r-2}D(u),D(u)\right){\rm d}\sigma {\rm d}\tau.
	\end{eqnarray*}
	as$~j~\to \infty $.
	Since
	\begin{eqnarray*}
		& & \left|\int_{0}^{t_0}\int_{0}^{t_0}\eta_\varepsilon(\tau-\sigma)\left( |D(u)|^{r-2}D(u),D(u)(\sigma)\right){\rm d}\sigma {\rm d}\tau-\int_{0}^{t_0}\left( |D(u)|^{r-2}D(u),D(u)\right){\rm d}\tau\right|\\
		&=&\left|\int_{0}^{t_0}\eta_\varepsilon(\tau-\sigma)\left( |D(u)|^{r-2}D(u),D(u_\varepsilon)(\tau)-D(u)(\tau)\right) {\rm d}\tau\right|\\
		&\leq& \left|\int_{0}^{t_0}\||D(u)|^{r-1}(\tau)\|_{L^{\frac{r}{r-1}}}\|D(u_\varepsilon-u)(\tau)\|_{L^{r}} {\rm d}\tau\right|\quad(\text{ H\"{o}lder's inequality})\\
		&=&	\int_{0}^{t_0}\|D(u)(\tau)\|^{r-1}_{L^{r}}\|D(u_\varepsilon-u)\|_{L^{r}}{\rm d}\tau \\
		&=&\int_{0}^{t_0}\|\nabla u(\tau)\|^{r-1}_{L^{r}}\|\nabla(u_\varepsilon-u)\|_{L^{r}}{\rm d}\tau \\
		&\leq& \|\nabla u\|_{L^{r}(L^{r})}\|\|\nabla(u_\varepsilon-u)\|_{L^{r}(L^{r})} \quad(\text{ H\"{o}lder's inequality})\\
        &\to&0
	\end{eqnarray*}
	as$~\varepsilon~\to 0 $, where the exponents of  H\"{o}lder's inequality are expressed as

	\begin{equation*}
		1=\frac{1}{\frac{r}{r-1}}+\frac{1}{r}.
	\end{equation*}
	Thus,
	\begin{equation*}
		\int_{0}^{t_0}\int_{0}^{t_0}\eta_\varepsilon(\tau-\sigma)\left( |D(u)|^{r-2}D(u),D(u)(\sigma)\right){\rm d}\sigma {\rm d}\tau\to\int_{0}^{t_0}\left( |D(u)|^{r-2}D(u),D(u)\right){\rm d}\tau
	\end{equation*}
	as$~\varepsilon~\to 0 $.
	
	According to Lemma \ref{le2.7} we have
	\begin{eqnarray*}
		& &\left|\int_{0}^{t_0}\int_{0}^{t_0}\eta_\varepsilon
		(\tau-\sigma)((u\cdot\nabla)u(\tau),u^{j}(\sigma)){\rm d}\sigma{\rm d}\tau-\int_{0}^{t_0}((u\cdot\nabla)u(\tau),u(\tau)){\rm d}\tau \right|\\
		&=&\left|\int_{0}^{t_0}((u\cdot\nabla)u(\tau),u^{j}_\varepsilon(\tau)-u(\tau)){\rm d}\tau \right| \\
		&=& \left|\int_{0}^{t_0}((u\cdot\nabla)(u^{j}_\varepsilon(\tau)-u(\tau)),u(\tau)){\rm d}\tau \right| \\
		&\leq& \||u|^{\frac{2}{\alpha+\theta}-2}u\|^{\alpha+\theta}_{L^{\frac{r(5r-6)(\alpha +\theta)}{5r^{2}-(11+2\theta)r+6}}\mathcal{M}^{3}(\dot{H}^{\frac{\theta}{\alpha+\theta}}\to L^{\frac{r(\alpha+\theta)}{r-1}})}\|\nabla u\|^{\frac{2r\theta}{5r-6}}_{L^{r}(L^{r})}\\
        & &\|u\|^{\frac{\alpha(5r-6)+3\theta(r-2)}{5r-6}}_{L^{\infty}(L^{2})}\|\nabla (u^{j}_\varepsilon-u)\|_{L^{r}(L^{r})}\\
        &\to& 0
	\end{eqnarray*}
	 as $j\to \infty$ and $\varepsilon\to 0$, \\ where the exponents of   H\"{o}lder's inequality are expressed as
	\begin{equation*}
		1=\frac{1}{\frac{r(5r-6)}{5r^{2}-(11+2\theta)r+6}}+\frac{1}{\frac{5r-6}{2\theta}}+\frac{1}{r}.
	\end{equation*}
	Therefore, we obtain
	\begin{equation*}
		\int_{0}^{t_0}\int_{0}^{t_0}\eta_\varepsilon
		(\tau-\sigma)((u\cdot\nabla)u(\tau),u^{j}(\sigma)){\rm d}\sigma{\rm d}\tau\to\int_{0}^{t_0}((u\cdot\nabla)u(\tau),u(\tau)){\rm d}\tau .
	\end{equation*}
	Taking into account $(u\cdot\nabla u,u)=0$, we have,
	\begin{equation*}
		\int_{0}^{t_0}\int_{0}^{t_0}\eta_\varepsilon
		(\tau-\sigma)((u\cdot\nabla)u(\tau),u^{j}(\sigma)){\rm d}\sigma{\rm d}\tau\to0
	\end{equation*}
	
	as $j\to \infty$ and $~\varepsilon~\to 0$.
	
Concluding the results and sending $\varepsilon\to~0$ in \eqref{eq3.2}, we attain
\begin{equation*}
	\|u(t_0)\|^{2}_{L^{2}}+2\int_{0}^{t_0}\|D(u)(\tau)\|^{r}_{L^{r}}{\rm d}\tau=\|u_0\|^{2}_{L^{2}}
\end{equation*}
i.e. \eqref{eq1.2}.
\end{proof}

\section{Proofs of Theorem \ref{th1.2} and Theorem \ref{th1.3}}\label{Sec4}
Under the hypotheses of Theorem \ref{th1.2} or Theorem \ref{th1.3}, in order to estimate the advection term in \eqref{eq1.1}, we need the next  two lemmas respectively:

\begin{lemma}\label{le2.8}
			For $u\in L^{\infty}(0,T;L^{2}_{\sigma}(\mathbb{R}^{3}))\cap L^{r}(0,T;W^{1,r}_{\sigma}(\mathbb{R}^{3}))$   and $u$ satisfies 	\begin{equation*}
			|u|^{\frac{10\theta^{2}-23\theta+7}{3+8\theta-5\theta^{2}}}u\in L^{\frac{(5r-6)(-5\theta^{2}+8\theta+3)}{(5r-6)(-\theta+4)+2(5\theta^{2}-8\theta-3)}}\mathcal{M}^{3}(\dot{H}^{1}\to L^{\frac{-5\theta^{2}+8\theta+3}{3-2\theta}}),
		\end{equation*}
		\begin{equation*}
			|\nabla u|^{\frac{10-5\theta}{7-3\theta}} \in L^{\frac{(\alpha+1)(7-3\theta)(5r-6)}{2(3-2\theta)(\alpha+1)(5r-6)-2(7-3\theta)}}\mathcal{M}^{3}(\dot{H}^{\frac{1}{\alpha+1}}\to L^{1}),
		\end{equation*}
		where $ \alpha\geq0,~r\geq2,~0\leq\theta<\frac{1}{2}$, we have the following estimate:
		\begin{eqnarray*}
			& &\left|\int_{0}^{t_0}\left( (u \cdot \nabla) u,u\right) {\rm d}\tau\right|\\
			&\lesssim&\left\|\left|\nabla u\right|^{\frac{10-5\theta}{7-3\theta}} \right\|^{\frac{7-3\theta}{10-5\theta}}_{L^{\frac{(\alpha+1)(7-3\theta)(5r-6)}{2(3-2\theta)(\alpha+1)(5r-6)-2(7-3\theta)}}\mathcal{M}^{3}(\dot{H}^{\frac{1}{\alpha+1}}\to L^{1})}\|\nabla u\|^{\frac{2r(7-3\theta)}{(\alpha+1)(5r-6)(10-5\theta)}}_{L^{r}(L^{r})}\\
& &\|u\|^{\frac{(5\alpha r+3r-6\alpha-6)(7-3\theta)}{(\alpha+1)(5r-6)(10-5\theta)}}_{L^{\infty}(L^{2})}\left\| |u|^{\frac{10\theta^{2}-23\theta+7}{3+8\theta-5\theta^{2}}}u\right\|^{\frac{(-5\theta^{2}+8\theta+3)(13-7\theta)^{2}}{(10-5\theta)(4-\theta)^{2}}}_{{L^{\frac{(5r-6)(s-\gamma)}{(5r-6)(2\gamma-1)-2(s-\gamma)}}}\mathcal{M}^{3}(\dot{H}^{1}\to L^{\frac{-5\theta^{2}+8\theta+3}{3-2\theta}})}\\
& &\|\nabla u\|^{\frac{2r}{5r-6}\cdot\frac{(-5\theta^{2}+8\theta+3)(13-7\theta)^{2}}{(10-5\theta)(4-\theta)^{2}}}_{L^{r}(L^{r})}\|u\|^{\frac{3r-6}{5r-6}\cdot\frac{(-5\theta^{2}+8\theta+3)(13-7\theta)^{2}}{(10-5\theta)(4-\theta)^{2}}}_{L^{\infty}(L^{2})}
		\end{eqnarray*}
		
	\end{lemma}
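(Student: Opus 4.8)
The plan is to establish the estimate at each fixed time and then recombine in time by H\"older's inequality, following the spatial splitting that underlies condition (ii) of Chen and Zhang \cite{CZ23} and replacing their $L^{2}$-gradient interpolation by the $L^{r}$-gradient Gagliardo--Nirenberg inequality of Lemma \ref{le2.3}. First I bound the integrand pointwise by Cauchy--Schwarz, $|(u\cdot\nabla)u\cdot u|\le|u|^{2}|\nabla u|$, so that it suffices to control $\int_{\mathbb{R}^{3}}|u|^{2}|\nabla u|\,dx$ for a.e. $\tau\in(0,t_{0})$. The key spatial step is a H\"older decomposition with conjugate exponents $p_{1}=\frac{10-5\theta}{7-3\theta}$ and $p_{1}'=\frac{10-5\theta}{3-2\theta}$, written as $\int|u|^{2}|\nabla u|=\int\big(|\nabla u|\,|u|^{1/p_{1}}\big)\,|u|^{2-1/p_{1}}$, which separates the integrand into one factor carrying $|\nabla u|^{p_{1}}$ and one carrying only a power of $|u|$; these are exactly the pieces matched to the two prescribed multipliers.

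For the gradient factor I would use $\big(\int|\nabla u|^{p_{1}}|u|\big)^{1/p_{1}}$ and apply Definition \ref{de2.1} with $g=u\in\dot{H}^{\frac{1}{\alpha+1}}$, giving $\int|\nabla u|^{p_{1}}|u|\le\||\nabla u|^{p_{1}}\|_{\mathcal{M}^{3}(\dot{H}^{\frac{1}{\alpha+1}}\to L^{1})}\,\|u\|_{\dot{H}^{\frac{1}{\alpha+1}}}$; then Lemma \ref{le2.3} with $m=1$, $j=\frac{1}{\alpha+1}$, $k=0$, $s=r$, $q=2$ (whence $\vartheta=\frac{2r}{(\alpha+1)(5r-6)}$) yields $\|u\|_{\dot{H}^{\frac{1}{\alpha+1}}}\lesssim\|\nabla u\|_{L^{r}}^{\frac{2r}{(\alpha+1)(5r-6)}}\|u\|_{L^{2}}^{\frac{5\alpha r+3r-6\alpha-6}{(\alpha+1)(5r-6)}}$. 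Raising the resulting bound to the power $\frac{1}{p_{1}}=\frac{7-3\theta}{10-5\theta}$ reproduces, factor by factor, the first group of norms in the asserted inequality.

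The remaining factor is $\|u\|_{L^{(13-7\theta)/(3-2\theta)}}^{(13-7\theta)/(10-5\theta)}$, since $\big(2-\tfrac1{p_{1}}\big)p_{1}'=\frac{13-7\theta}{3-2\theta}$. This is handled à la Lemma \ref{le2.4}, but now using the second multiplier: writing $|u|^{\frac{13-7\theta}{3-2\theta}}=\big(|u|^{\frac{10\theta^{2}-23\theta+7}{3+8\theta-5\theta^{2}}}u\big)\cdot u$ in $L^{\frac{-5\theta^{2}+8\theta+3}{3-2\theta}}$ and invoking Definition \ref{de2.1} with the map $\dot{H}^{1}\to L^{\frac{-5\theta^{2}+8\theta+3}{3-2\theta}}$, followed by Lemma \ref{le2.3} with $m=1$, $j=1$, $k=0$, $s=r$, $q=2$ (so $\vartheta=\frac{2r}{5r-6}$, $1-\vartheta=\frac{3r-6}{5r-6}$), i.e. $\|u\|_{\dot{H}^{1}}\lesssim\|\nabla u\|_{L^{r}}^{\frac{2r}{5r-6}}\|u\|_{L^{2}}^{\frac{3r-6}{5r-6}}$. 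As in \cite{CZ23}, this step requires one additional interpolation before the multiplier can be applied, which is precisely what produces the squared weight $\frac{(-5\theta^{2}+8\theta+3)(13-7\theta)^{2}}{(10-5\theta)(4-\theta)^{2}}$ on this group of norms.

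Finally I would integrate over $[0,t_{0}]$ and apply H\"older in time, placing the two multiplier norms in the Bochner spaces with the stated time exponents $\frac{(\alpha+1)(7-3\theta)(5r-6)}{2(3-2\theta)(\alpha+1)(5r-6)-2(7-3\theta)}$ and $\frac{(5r-6)(-5\theta^{2}+8\theta+3)}{(5r-6)(-\theta+4)+2(5\theta^{2}-8\theta-3)}$, the gradient terms in $L^{r}(L^{r})$, and the $\|u\|_{L^{2}}$ terms in $L^{\infty}(L^{2})$ (which consume no time integrability). The main obstacle is the exponent bookkeeping: one must check that the spatial H\"older exponents are conjugate, that both Gagliardo--Nirenberg parameters lie in $[0,1]$, and above all that the reciprocals of the temporal exponents sum to $1$. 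The hypotheses $r\ge2$, $\alpha\ge0$ and $0\le\theta<\frac12$ are exactly what make all these exponents admissible and the denominators positive; a useful consistency check is that at $r=2$ the exponent $\frac{(5r-6)(-5\theta^{2}+8\theta+3)}{(5r-6)(-\theta+4)+2(5\theta^{2}-8\theta-3)}$ collapses to $\frac{2(-5\theta^{2}+8\theta+3)}{5(1-\theta)^{2}}$, recovering the Navier--Stokes exponent of \cite{CZ23}.
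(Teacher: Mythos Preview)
Your approach is essentially identical to the paper's: the same spatial H\"older split $|u|^{2}|\nabla u|=\big(|u|\,|\nabla u|^{1/\gamma}\big)^{\gamma}\,|u|^{2-\gamma}$ with $\gamma=\frac{7-3\theta}{10-5\theta}=1/p_{1}$, the same two multiplier estimates (first $|\nabla u|^{1/\gamma}\in\mathcal{M}^{3}(\dot H^{1/(\alpha+1)}\to L^{1})$, then the $|u|$-multiplier into $L^{(s-\gamma)/(1-\gamma)}$ with $s=1+\theta$), the same two Gagliardo--Nirenberg interpolations, and the same temporal H\"older. One minor slip worth fixing: the pointwise identity you wrote, $|u|^{\frac{13-7\theta}{3-2\theta}}=\big(|u|^{\frac{10\theta^{2}-23\theta+7}{3+8\theta-5\theta^{2}}}u\big)\cdot u$, is false (the right-hand side has total power $\frac{13-7\theta}{3+8\theta-5\theta^{2}}$); what you actually need---and what the paper uses---is the norm rewriting $\|u\|_{L^{(2-\gamma)/(1-\gamma)}}^{2-\gamma}=\big\|\big(|u|^{\frac{2-\gamma}{s-\gamma}-2}u\big)\cdot u\big\|_{L^{(s-\gamma)/(1-\gamma)}}^{s-\gamma}$, after which the second multiplier and Gagliardo--Nirenberg apply directly with no further ``additional interpolation''.
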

	\begin{lemma}\label{le2.9}
			For $u\in L^{\infty}(0,T;L^{2}_{\sigma}(\mathbb{R}^{3}))\cap L^{r}(0,T;W^{1,r}_{\sigma}(\mathbb{R}^{3}))$ and $u$ satisfies 	
		\begin{equation*}
			\nabla u \in L^{p}\mathcal{M}^{3}(\dot{W}^{\frac{1+\theta}{1+\alpha},\frac{6-3\theta}{3-\theta^{2}}}\to L^{\frac{12-6\theta}{9-5\theta}}),
		\end{equation*}
		\begin{equation*}
			u \in L^{q}(L^{\frac{12-6\theta}{3-\theta}}),~ \frac{1}{p}+\frac{1}{q}+\frac{1}{r}=1,~2\leq p<\infty,
		\end{equation*}
		where $ \alpha\geq0,~r\ge2,~-1\leq\theta<0$, we have
		\begin{eqnarray*}
			& &\left|\int_{0}^{t_0}\left((u \cdot \nabla) u,u\right ) {\rm d}\tau\right|\\
			&\lesssim_{t_0}&\left\|u \right\|_{L^{q}(L^{\frac{12-6\theta}{3-\theta}})}\|\nabla u\|_{L^{p}\mathcal{M}^{3}(\dot{W}^{\frac{1+\theta}{1+\alpha},\frac{6-3\theta}{3-\theta^{2}}}\to L^{\frac{12-6\theta}{9-5\theta}})}\\
& &(\|\nabla u\|_{L^{r}(L^{r})}+1)\|u\|^{1-\frac{r(-\theta+4-3\alpha\theta+2\alpha\theta^{2})}{(5r-6)(1+\alpha)(2-\theta)}}_{L^{\infty}(L^{2})}.
		\end{eqnarray*}
		
	\end{lemma}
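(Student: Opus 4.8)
The plan is to bound the trilinear form $\int_0^{t_0}\bigl((u\cdot\nabla)u,u\bigr)\,\mathrm{d}\tau$ pointwise in time by a product of three factors and then integrate, so that the two hypotheses and the energy bound combine under the constraint $\frac1p+\frac1q+\frac1r=1$. First I would freeze the time variable and write $\bigl((u\cdot\nabla)u,u\bigr)=\int u_j(\partial_j u_i)u_i\,\mathrm{d}x$, reading $\nabla u$ as a Sobolev multiplier acting on one copy of $u$. Applying H\"older in space with the conjugate pair $\frac{12-6\theta}{3-\theta}$ and $\frac{12-6\theta}{9-5\theta}$ (which are indeed conjugate, since $(3-\theta)+(9-5\theta)=12-6\theta$) gives
\[
\bigl|\bigl((u\cdot\nabla)u,u\bigr)\bigr|\le \|u\|_{L^{\frac{12-6\theta}{3-\theta}}}\,\bigl\|(\nabla u)\,u\bigr\|_{L^{\frac{12-6\theta}{9-5\theta}}},
\]
and the definition of the multiplier space (Definition \ref{de2.1}) turns the second factor into $\|\nabla u\|_{\mathcal{M}^{3}(\dot{W}^{\frac{1+\theta}{1+\alpha},\frac{6-3\theta}{3-\theta^{2}}}\to L^{\frac{12-6\theta}{9-5\theta}})}\,\|u\|_{\dot{W}^{\frac{1+\theta}{1+\alpha},\frac{6-3\theta}{3-\theta^{2}}}}$. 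This isolates the two quantities controlled by the hypotheses together with a fractional Sobolev norm of $u$ that must still be absorbed into the energy class.

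The second step is to estimate $\|u\|_{\dot{W}^{\frac{1+\theta}{1+\alpha},\frac{6-3\theta}{3-\theta^{2}}}}$ by Gagliardo--Nirenberg interpolation (Lemma \ref{le2.3}) between $L^2$ and $\dot W^{1,r}$, choosing the smoothness index $j=\frac{1+\theta}{1+\alpha}$, the endpoint $m=0$ with exponent $2$, the endpoint $k=1$ with exponent $r$, and target exponent $\frac{6-3\theta}{3-\theta^{2}}$, and solving the dimensional balance
\[
\frac{1+\theta}{1+\alpha}-\frac{3-\theta^{2}}{2-\theta}=-\frac32\,\vartheta+\Bigl(1-\frac3r\Bigr)(1-\vartheta)
\]
for the interpolation parameter $\vartheta$; this yields $\|u\|_{\dot{W}^{\frac{1+\theta}{1+\alpha},\frac{6-3\theta}{3-\theta^{2}}}}\lesssim\|u\|_{L^2}^{\vartheta}\|\nabla u\|_{L^r}^{1-\vartheta}$. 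A direct computation shows $\vartheta$ is exactly the exponent $1-\frac{r(-\theta+4-3\alpha\theta+2\alpha\theta^{2})}{(5r-6)(1+\alpha)(2-\theta)}$ appearing on $\|u\|_{L^\infty(L^2)}$ in the statement (for instance, at $\theta=\alpha=0$ both reduce to $\frac{3r-6}{5r-6}$).

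Finally I would integrate over $[0,t_0]$ and apply H\"older in time, assigning the exponent $q$ to $\|u\|_{L^{\frac{12-6\theta}{3-\theta}}}$, $p$ to $\|\nabla u\|_{\mathcal{M}}$, $\infty$ to $\|u\|_{L^2}^{\vartheta}$, and $\frac{r}{1-\vartheta}$ to $\|\nabla u\|_{L^r}^{1-\vartheta}$. Since $\frac1q+\frac1p+\frac{1-\vartheta}{r}\le\frac1q+\frac1p+\frac1r=1$, the generalized H\"older inequality applies on the finite interval and produces a harmless power of $t_0$ (the source of the $\lesssim_{t_0}$); the last factor integrates to $\|\nabla u\|_{L^r(L^r)}^{1-\vartheta}$, which I would dominate by $\|\nabla u\|_{L^r(L^r)}+1$ via $x^{1-\vartheta}\le x+1$ for $x\ge0$ and $0\le 1-\vartheta\le1$, reproducing the factor $(\|\nabla u\|_{L^r(L^r)}+1)$. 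The main obstacle is the bookkeeping in the second step: one must verify that the $\vartheta$ solving the balance equation genuinely lies in $[0,1]$ throughout the admissible range $\alpha\ge0$, $r\ge2$, $-1\le\theta<0$ (so that both the fractional Gagliardo--Nirenberg inequality and the $+1$ trick are legitimate) and that it matches the explicit exponents in the statement; the remaining steps are H\"older, the multiplier definition, and interpolation.
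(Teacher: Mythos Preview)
Your proposal is correct and follows essentially the same route as the paper: H\"older in space with the conjugate pair $\tfrac{12-6\theta}{3-\theta}$, $\tfrac{12-6\theta}{9-5\theta}$, the multiplier bound on $\nabla u$, Gagliardo--Nirenberg interpolation of $\|u\|_{\dot W^{\frac{1+\theta}{1+\alpha},\frac{6-3\theta}{3-\theta^2}}}$ between $L^2$ and $\dot W^{1,r}$ with the stated exponent, and H\"older in time with $p,q,r$. The only cosmetic difference is that the paper applies the elementary bound $x^{1-\vartheta}\le x+1$ pointwise in $\tau$ \emph{before} integrating (so the time-H\"older exponents are exactly $p,q,r,\infty$ and the $t_0$-dependence enters via $\|\,\|\nabla u\|_{L^r}+1\,\|_{L^r(0,t_0)}$), whereas you integrate first with the slack exponent $r/(1-\vartheta)$ and apply the same elementary bound afterward; both orderings are valid and yield the same conclusion.
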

For the sake of compactness, the proofs of Lemma \ref{le2.8} and Lemma \ref{le2.9} are detailed in the appendix. \\

\begin{proof}[Sketch of proof of Theorem \ref{th1.2}]	
	Since the estimates of  the linear terms in \eqref{eq3.1} are the same as those in Theorem \ref{th1.1},  we omit them. And we only give the estimate about the nonlinear term,
	For Theorem \ref{th1.2}, On the basis of Lemma \ref{le2.8} we acquire
	\begin{eqnarray*}
		& &\left|\int_{0}^{t_0}\left( u(\tau) \cdot \nabla (u^{j}_\varepsilon(\tau)-u(\tau)),u(\tau)\right ) {\rm d}\tau\right|\\
		&\lesssim&\left\|\left|\nabla(u^{j}_\varepsilon-u)\right|^{\frac{1}{\gamma}} \right\|^{\gamma}_{L^{\frac{\gamma(\alpha+1)(5r-6)}{2(1-\gamma)(\alpha+1)(5r-6)-2\gamma}}\mathcal{M}^{3}(\dot{H}^{\frac{1}{\alpha+1}}\to L^{1})}\\
& &\|\nabla u\|^{\frac{2r\gamma}{(\alpha+1)(5r-6)}}_{L^{r}(L^{r})}\|u\|^{\frac{\alpha\gamma(\beta+1)-\gamma}{\alpha(\beta+1)}}_{L^{\infty}(L^{2})}~\|u\|^{2-\gamma}_{L^{\frac{2-\gamma}{2\gamma-1}}(L^{\frac{2-\gamma}{1-\gamma}})},
	\end{eqnarray*}where the exponents satisfy the following relation,
	\begin{equation*}
		\begin{cases}
			\frac{1}{\alpha+1}-\frac{3}{2}=(1-\frac{3}{r})\frac{2r}{(\alpha+1)(5r-6)} + (-\frac{3}{2})(1-\frac{2r}{(\alpha+1)(5r-6)}),\\
			1=\frac{1}{\frac{(\alpha+1)(5r-6)}{2(1-\gamma)(\alpha+1)(5r-6)-2\gamma}}+\frac{1}{\frac{(\alpha+1)(5r-6)}{2\gamma}}+\frac{1}{\frac{1}{2\gamma-1}},
		\end{cases}
	\end{equation*}
	where $\gamma=\frac{7-3\theta}{10-5\theta}\in [\frac{21}{30},\frac{22}{30})$, and $r\geq2$ guarantees
	
	\begin{equation*}
		\begin{cases}
			1-\frac{2r}{(\alpha+1)(5r-6)}\geq0,\\
		2(1-\gamma)(\alpha+1)(5r-6)-2\gamma\geq8-10\gamma>0.
		\end{cases}
	\end{equation*}

	 For $\|u\|_{L^{\frac{2-\gamma}{2\gamma-1}},L^{\frac{2-\gamma}{1-\gamma}}}$, taking $s=\frac{15\gamma-10}{5\gamma-3}$, we have
	\begin{eqnarray*}
		& &\|u\|^{\frac{2\gamma-1}{2-\gamma}}_{L^{\frac{2-\gamma}{2\gamma-1}},L^{\frac{2-\gamma}{1-\gamma}}}\\
		&\lesssim&\left\| |u|^{\frac{2-\gamma}{s-\gamma}-2}u\right\|^{\frac{s-\gamma}{2\gamma-1}}_{{L^{\frac{(5r-6)(s-\gamma)}{(5r-6)(2\gamma-1)-2(s-\gamma)}}}\mathcal{M}^{3}(\dot{H}^{1}\to L^{\frac{s-\gamma}{1-\gamma}})}\|\nabla u\|^{\frac{2r}{5r-6}\cdot\frac{s-\gamma}{2\gamma-1}}_{L^{r}(L^{r})}\|u\|^{\frac{3r-6}{5r-6}\cdot\frac{s-\gamma}{2\gamma-1}}_{L^{\infty}(L^{2})},
	\end{eqnarray*}where the exponents satisfy the following relation,
	\begin{equation*}
		\begin{cases}
			1-\frac{3}{2}=(1-\frac{3}{r})\frac{2r}{5r-6} + (-\frac{3}{2})(1-\frac{2r}{5r-6}),\\
			1=\frac{1}{\frac{(5r-6)(2\gamma-1)}{(5r-6)(2\gamma-1)-2(s-\gamma)}}+\frac{1}{\frac{(5r-6)(2\gamma-1)}{2(s-\gamma)}}.
		\end{cases}
	\end{equation*}
	and $r\geq2$ guarantees $(5r-6)(2\gamma-1)-2(s-\gamma)\geq10\gamma-4-2s=\frac{50(\gamma-\frac{4}{5})^{2}}{5\gamma-3}>0$.
	
\end{proof}

\begin{proof}[Sketch of proof of Theorem \ref{th1.3}]
	In the light of  Lemma \ref{le2.9}, we gain
	\begin{eqnarray*}
		& &\left|\int_{0}^{t_0}\left(u(\tau) \cdot \nabla u(\tau),u^{j}_\varepsilon(\tau)-u(\tau)\right ) {\rm d}\tau\right|\\
		&\lesssim&\int_{0}^{t_0}\left\|u^{j}_\varepsilon(\tau)-u (\tau)\right\|_{L^{\frac{12-6\theta}{3-\theta}}}\|\nabla u\|_{\mathcal{M}^{3}(\dot{W}^{\frac{1+\theta}{1+\alpha},\frac{6-3\theta}{3-\theta^{2}}}\to L^{\frac{12-6\theta}{9-5\theta}})}\|\nabla u\|^{\frac{r(-\theta+4-3\alpha\theta+2\alpha\theta^{2})}{(5r-6)(1+\alpha)(2-\theta)}}_{L^{r}}\\
		& &\|u\|^{1-\frac{r(-\theta+4-3\alpha\theta+2\alpha\theta^{2})}{(5r-6)(1+\alpha)(2-\theta)}}_{L^{2}}{\rm d}\tau\quad(*)\\
		&\lesssim_{t_0}&\left\|u^{j}_\varepsilon-u \right\|_{L^{q}(L^{\frac{12-6\theta}{3-\theta}})}\|\nabla u\|_{L^{p}\mathcal{M}^{3}(\dot{W}^{\frac{1+\theta}{1+\alpha},\frac{6-3\theta}{3-\theta^{2}}}\to L^{\frac{12-6\theta}{9-5\theta}})}(\|\nabla u\|_{L^{r}(L^{r})}+1)\\
		& &\|u\|^{1-\frac{r(-\theta+4-3\alpha\theta+2\alpha\theta^{2})}{(5r-6)(1+\alpha)(2-\theta)}}_{L^{\infty}(L^{2})},
	\end{eqnarray*}where the exponents satisfy the following relation,
	\begin{equation*}
		\begin{cases}
			\frac{1+\theta}{1+\alpha}-\frac{3}{\frac{6-3\theta}{3-\theta^{2}}} =  (1-\frac{3}{r})\frac{r(-\theta+4-3\alpha\theta+2\alpha\theta^{2})}{(5r-6)(1+\alpha)(2-\theta)}+  (-\frac{3}{2})({1-\frac{r(-\theta+4-3\alpha\theta+2\alpha\theta^{2})}{(5r-6)(1+\alpha)(2-\theta)}}),\\
			1=\frac{1}{r}+\frac{1}{q}+\frac{1}{p},~2\leq p<\infty.
		\end{cases}
	\end{equation*}
		and $r\geq2$ guarantees $0<\frac{r(-\theta+4-3\alpha\theta+2\alpha\theta^{2})}{(5r-6)(1+\alpha)(2-\theta)}<1$.
\end{proof}

Actually, we can replace the exponent of H\"{o}lder's inequality $	1=\frac{1}{r}+\frac{1}{q}+\frac{1}{p}$ with $	1=\frac{1}{\frac{(5r-6)(1+\alpha)(2-\theta)}{-\theta+4-3\alpha\theta+2\alpha\theta^{2}}}+\frac{1}{q'}+\frac{1}{p'}$ at the asterisk(*), when $r\geq2$, we have $\frac{(5r-6)(1+\alpha)(2-\theta)}{-\theta+4-3\alpha\theta+2\alpha\theta^{2}}>r$, $p'$ has a wider range of values than $p$, and Theorem \ref{th1.3} still holds true.

\section{Appendix}\label{Sec5}
For the sake of completeness and reader convenience, we present the detailed proofs of Lemma \ref{le2.7}, Lemma \ref{le2.8},  and Lemma \ref{le2.9} respectively below.

\begin{proof}[Proof of Lemma \ref{le2.7}]
	\begin{eqnarray*}
		& &\left|\int_{0}^{t_0}((u\cdot\nabla)u,u){\rm d}\sigma \right| \\
		&\leq&\int_{0}^{t_0}\|u^{2}\|_{L^{\frac{r}{r-1}}}\|\nabla u\|_{L^{r}} {\rm d}\tau \quad(\text{ H\"{o}lder's inequality})\\
		&=&  \int_{0}^{t_0}\| u^{\frac{2}{\alpha+\theta}-2}u\cdot u\|^{\alpha+\theta}_{L^{\frac{r(\alpha+\theta)}{r-1}}}\|\nabla u\|_{L^{r}}{\rm d}\tau \\
		&\leq&  \int_{0}^{t_0}\||u|^{\frac{2}{\alpha+\theta}-2}u\|^{\alpha+\theta}_{\mathcal{M}^{3}(\dot{H}^{\frac{\theta}{\alpha+\theta}}\to L^{\frac{r(\alpha+\theta)}{r-1}})}\|u\|^{\alpha+\theta}_{\dot{H}^{\frac{\theta}{\alpha+\theta}}}\|\nabla u\|_{L^{r}}{\rm d}\tau \\
		&\lesssim& \int_{0}^{t_0}\||u|^{\frac{2}{\alpha+\theta}-2}u\|^{\alpha+\theta}_{\mathcal{M}^{3}(\dot{H}^{\frac{\theta}{\alpha+\theta}}\to L^{\frac{r(\alpha+\theta)}{r-1}})}\|\nabla u\|^{\frac{2r\theta}{5r-6}}_{L^{r}}\| u\|^{\frac{\alpha(5r-6)+3\theta(r-2)}{5r-6}}_{L^{2}}\|\nabla u\|_{L^{r}}{\rm d}\tau\\
		& &(\text{Gagliardo-Nirenberg inequality})\\
		&\lesssim& \left\|\||u|^{\frac{2}{\alpha+\theta}-2}u\|^{\alpha+\theta}_{\mathcal{M}^{3}(\dot{H}^{\frac{\theta}{\alpha+\theta}}\to L^{\frac{r(\alpha+\theta)}{r-1}})}\right\|_{L^{\frac{r(5r-6)}{5r^{2}-(11+2\theta)r+6}}}\left\|\|\nabla u\|^{\frac{2r\theta}{5r-6}}_{L^{r}}\right\|_{L^{\frac{5r-6}{2\theta}}}\\
& &~~\| u\|^{\frac{\alpha(5r-6)+3\theta(r-2)}{5r-6}}_{L^{\infty}(L^{2})}\|\nabla u\|_{L^{r}(L^{r})} ~~(\text{ H\"{o}lder's inequality})\\
		&\leq& \||u|^{\frac{2}{\alpha+\theta}-2}u\|^{\alpha+\theta}_{L^{\frac{r(5r-6)(\alpha +\theta)}{5r^{2}-(11+2\theta)r+6}}\mathcal{M}^{3}(\dot{H}^{\frac{\theta}{\alpha+\theta}}\to L^{\frac{r(\alpha+\theta)}{r-1}})}\|\nabla u\|^{\frac{2r\theta}{5r-6}}_{L^{r}(L^{r})}\\
& &~~\|u\|^{\frac{\alpha(5r-6)+3\theta(r-2)}{5r-6}}_{L^{\infty}(L^{2})}\|\nabla u\|_{L^{r}(L^{r})},
	\end{eqnarray*}
where the   aforementioned exponents satisfy the following relation
	\begin{equation*}
		1=\frac{1}{\frac{r(5r-6)}{5r^{2}-(11+2\theta)r+6}}+\frac{1}{\frac{5r-6}{2\theta}}+\frac{1}{r}.
	\end{equation*}
\end{proof}

\begin{proof}[Proof of Lemma \ref{le2.8}]
	\begin{eqnarray*}
		& &\left|\int_{0}^{t_0}\left( (u \cdot \nabla) u,u\right) {\rm d}\tau\right|\\
		&\leq&\int_{0}^{t_0}\left\||u|^{\gamma}~\nabla u \right\|_{L^{\frac{1}{\gamma}}}\||u|^{2-\gamma}\|_{L^{\frac{1}{1-\gamma}}}{\rm d}\tau\quad(\text{ H\"{o}lder's inequality})\\
		&=&\int_{0}^{t_0}\left\|u~\left|\nabla u\right|^{\frac{1}{\gamma}} \right\|^{\gamma}_{L^{1}} \|u\|^{2-\gamma}_{L^{\frac{2-\gamma}{1-\gamma}}}{\rm d}\tau\\
		&\leq&\int_{0}^{t_0} \left\|\left|\nabla u\right|^{\frac{1}{\gamma}}  \right\|^{\gamma}_{\mathcal{M}^{3}(\dot{H}^{\frac{1}{\alpha+1}}\to L^{1})}\|u\|^{\gamma}_{\dot{H}^{\frac{1}{\alpha+1}}}\|u\|^{2-\gamma}_{L^{\frac{2-\gamma}{1-\gamma}}}{\rm d}\tau\\
		&\lesssim&\int_{0}^{t_0} \left\|\left|\nabla u\right|^{\frac{1}{\gamma}}  \right\|^{\gamma}_{\mathcal{M}^{3}(\dot{H}^{\frac{1}{\alpha+1}}\to L^{1})}\|\nabla u\|^{\frac{2r\gamma}{(\alpha+1)(5r-6)}}_{L^{r}}\|u\|^{\gamma-\frac{2r\gamma}{(\alpha+1)(5r-6)}}_{L^{2}}\|u\|^{2-\gamma}_{L^{\frac{2-\gamma}{1-\gamma}}}{\rm d}\tau\\
		& &\quad(\text{Gagliardo-Nirenberg inequality})\\
		&\leq&\left\| \left\|\left|\nabla u\right|^{\frac{1}{\gamma}}  \right\|^{\gamma}_{\mathcal{M}^{3}(\dot{H}^{\frac{1}{\alpha+1}}\to L^{1})}  \right\|     _{L^{\frac{(\alpha+1)(5r-6)}{2(1-\gamma)(\alpha+1)(5r-6)-2\gamma}}}\left\|\|\nabla u\|^{\frac{2r\gamma}{(\alpha+1)(5r-6)}}_{L^{r}}\right\|_{L^{\frac{(\alpha+1)(5r-6)}{2\gamma}}}\\
		& &\|u\|^{\gamma-\frac{2r\gamma}{(\alpha+1)(5r-6)}}_{L^{\infty}(L^{2})}~\left\|\|u\|^{2-\gamma}_{L^{\frac{2-\gamma}{1-\gamma}}}\right\|_{L^{\frac{1}{2\gamma-1}}}\quad(\text{ H\"{o}lder's inequality})\\
		&=&\left\|\left|\nabla u\right|^{\frac{1}{\gamma}} \right\|^{\gamma}_{L^{\frac{\gamma(\alpha+1)(5r-6)}{2(1-\gamma)(\alpha+1)(5r-6)-2\gamma}}\mathcal{M}^{3}(\dot{H}^{\frac{1}{\alpha+1}}\to L^{1})}\|\nabla u\|^{\frac{2r\gamma}{(\alpha+1)(5r-6)}}_{L^{r}(L^{r})}\\
& &~~\|u\|^{\gamma-\frac{2r\gamma}{(\alpha+1)(5r-6)}}_{L^{\infty}(L^{2})}~\|u\|^{2-\gamma}_{L^{\frac{2-\gamma}{2\gamma-1}}(L^{\frac{2-\gamma}{1-\gamma}})},
	\end{eqnarray*} where the exponents satisfy the following relation,
	\begin{equation*}
		\begin{cases}
			\frac{1}{\alpha+1}-\frac{3}{2}=(1-\frac{3}{r})\frac{2r}{(\alpha+1)(5r-6)} + (-\frac{3}{2})(1-\frac{2r}{(\alpha+1)(5r-6)}),\\
			1=\frac{1}{\frac{(\alpha+1)(5r-6)}{2(1-\gamma)(\alpha+1)(5r-6)-2\gamma}}+\frac{1}{\frac{(\alpha+1)(5r-6)}{2\gamma}}+\frac{1}{\frac{1}{2\gamma-1}},
		\end{cases}
	\end{equation*}
	and $\gamma=\frac{7-3\theta}{10-5\theta}\in [\frac{21}{30},\frac{22}{30})$, and $r\geq2$ guarantees
	\begin{equation*}
		\begin{cases}
			1-\frac{2r}{(\alpha+1)(5r-6)}\geq0,\\
			2(1-\gamma)(\alpha+1)(5r-6)-2\gamma\geq8-10\gamma>0.
		\end{cases}
	\end{equation*}
	
	For $\|u\|_{L^{\frac{2-\gamma}{2\gamma-1}},L^{\frac{2-\gamma}{1-\gamma}}}$, taking $s=\frac{15\gamma-10}{5\gamma-3}$, we have
	\begin{eqnarray*}
		& &\|u\|^{\frac{2\gamma-1}{2-\gamma}}_{L^{\frac{2-\gamma}{2\gamma-1}},L^{\frac{2-\gamma}{1-\gamma}}}\\
		&=&\int_{0}^{t_0}\|u\|^{{\frac{2-\gamma}{2\gamma-1}}}_{L^{\frac{2-\gamma}{1-\gamma}}}{\rm d}\tau\\
		&\leq&\int_{0}^{t_0}\left\| |u|^{\frac{2-\gamma}{s-\gamma}-2}u\cdot u\right\|^{\frac{s-\gamma}{2\gamma-1}}_{L^{\frac{s-\gamma}{1-\gamma}}}{\rm d}\tau\\
		&\leq&\int_{0}^{t_0}\left\| |u|^{\frac{2-\gamma}{s-\gamma}-2}u\right\|^{\frac{s-\gamma}{2\gamma-1}}_{\mathcal{M}^{3}(\dot{H}^{1}\to L^{\frac{s-\gamma}{1-\gamma}})}\|u\|^{\frac{s-\gamma}{2\gamma-1}}_{\dot{H}^{1}}{\rm d}\tau\\
		&\leq&\int_{0}^{t_0}\left\| |u|^{\frac{2-\gamma}{s-\gamma}-2}u\right\|^{\frac{s-\gamma}{2\gamma-1}}_{\mathcal{M}^{3}(\dot{H}^{1}\to L^{\frac{s-\gamma}{1-\gamma}})}\|\nabla u\|^{\frac{2r}{5r-6}\cdot\frac{s-\gamma}{2\gamma-1}}_{L^{r}}\|u\|^{\frac{3r-6}{5r-6}\cdot\frac{s-\gamma}{2\gamma-1}}_{L^{2}}{\rm d}\tau\\
		& &\quad(\text{Gagliardo-Nirenberg inequality})\\
		&\leq&\left\|\left\| |u|^{\frac{2-\gamma}{s-\gamma}-2}u\right\|^{\frac{s-\gamma}{2\gamma-1}}_{\mathcal{M}^{3}(\dot{H}^{1}\to L^{\frac{s-\gamma}{1-\gamma}})}\right\|_{L^{\frac{(5r-6)(2\gamma-1)}{(5r-6)(2\gamma-1)-2(s-\gamma)}}}\left\|\|\nabla u\|^{\frac{2r}{5r-6}\cdot\frac{s-\gamma}{2\gamma-1}}_{L^{r}} \right\|_{L^{\frac{(5r-6)(2\gamma-1)}{2(s-\gamma)}}}\\
& &~~\|u\|^{\frac{3r-6}{5r-6}\cdot\frac{s-\gamma}{2\gamma-1}}_{L^{\infty}(L^{2})} \quad(\text{ H\"{o}lder's inequality})\\
		&=&\left\| |u|^{\frac{2-\gamma}{s-\gamma}-2}u\right\|^{\frac{s-\gamma}{2\gamma-1}}_{{L^{\frac{(5r-6)(s-\gamma)}{(5r-6)(2\gamma-1)-2(s-\gamma)}}}\mathcal{M}^{3}(\dot{H}^{1}\to L^{\frac{s-\gamma}{1-\gamma}})}\|\nabla u\|^{\frac{2r}{5r-6}\cdot\frac{s-\gamma}{2\gamma-1}}_{L^{r}(L^{r})}\|u\|^{\frac{3r-6}{5r-6}\cdot\frac{s-\gamma}{2\gamma-1}}_{L^{\infty}(L^{2})},
	\end{eqnarray*}where the exponents satisfy the following relation,
	\begin{equation*}
		\begin{cases}
			1-\frac{3}{2}=(1-\frac{3}{r})\frac{2r}{5r-6} + (-\frac{3}{2})(1-\frac{2r}{5r-6}),\\
			1=\frac{1}{\frac{(5r-6)(2\gamma-1)}{(5r-6)(2\gamma-1)-2(s-\gamma)}}+\frac{1}{\frac{(5r-6)(2\gamma-1)}{2(s-\gamma)}}.
		\end{cases}
	\end{equation*}
	and $r\geq2$ guarantees $$(5r-6)(2\gamma-1)-2(s-\gamma)\geq10\gamma-4-2s=\frac{50(\gamma-\frac{4}{5})^{2}}{5\gamma-3}>0.$$
\end{proof}

\begin{proof}[Proof of Lemma \ref{le2.9}]
	\begin{eqnarray*}
		& &\left|\int_{0}^{t_0}\left((u \cdot \nabla) u,u\right ) {\rm d}\tau\right|\\
		&\leq&\int_{0}^{t_0}\left\|u \right\|_{L^{\frac{12-6\theta}{3-\theta}}}\|u\cdot \nabla u\|_{L^{\frac{12-6\theta}{9-5\theta}}}{\rm d}\tau\quad(\text{ H\"{o}lder's inequality})\\
		&\leq&\int_{0}^{t_0}\left\|u \right\|_{L^{\frac{12-6\theta}{3-\theta}}}\|\nabla u\|_{\mathcal{M}^{3}(\dot{W}^{\frac{1+\theta}{1+\alpha},\frac{6-3\theta}{3-\theta^{2}}}\to L^{\frac{12-6\theta}{9-5\theta}})}\|u\|_{\dot{W}^{\frac{1+\theta}{1+\alpha},\frac{6-3\theta}{3-\theta^{2}}}}{\rm d}\tau\\
		&\lesssim& \int_{0}^{t_0}\left\|u \right\|_{L^{\frac{12-6\theta}{3-\theta}}}\|\nabla u\|_{\mathcal{M}^{3}(\dot{W}^{\frac{1+\theta}{1+\alpha},\frac{6-3\theta}{3-\theta^{2}}}\to L^{\frac{12-6\theta}{9-5\theta}})}\|\nabla u\|^{\frac{r(-\theta+4-3\alpha\theta+2\alpha\theta^{2})}{(5r-6)(1+\alpha)(2-\theta)}}_{L^{r}}\\
		& &\|u\|^{1-\frac{r(-\theta+4-3\alpha\theta+2\alpha\theta^{2})}{(5r-6)(1+\alpha)(2-\theta)}}_{L^{2}}{\rm d}\tau\quad(\text{Gagliardo-Nirenberg inequality})\\
		&=&\int_{0}^{t_0}\left\|u \right\|_{L^{\frac{12-6\theta}{3-\theta}}}\|\nabla u\|_{\mathcal{M}^{3}(\dot{W}^{\frac{1+\theta}{1+\alpha},\frac{6-3\theta}{3-\theta^{2}}}\to L^{\frac{12-6\theta}{9-5\theta}})}\|\nabla u\|^{\frac{r(-\theta+4-3\alpha\theta+2\alpha\theta^{2})}{(5r-6)(1+\alpha)(2-\theta)}}_{L^{r}}\\
		& &\|u\|^{1-\frac{r(-\theta+4-3\alpha\theta+2\alpha\theta^{2})}{(5r-6)(1+\alpha)(2-\theta)}}_{L^{2}}{\rm d}\tau\quad(*)\\
		&\lesssim& C\int_{0}^{t_0}\left\|u \right\|_{L^{\frac{12-6\theta}{3-\theta}}}\|\nabla u\|_{\mathcal{M}^{3}(\dot{W}^{\frac{1+\theta}{1+\alpha},\frac{6-3\theta}{3-\theta^{2}}}\to L^{\frac{12-6\theta}{9-5\theta}})}\\
& &~~(\|\nabla u\|_{L^{r}}+1)\|u\|^{1-\frac{r(-\theta+4-3\alpha\theta+2\alpha\theta^{2})}{(5r-6)(1+\alpha)(2-\theta)}}_{L^{2}}{\rm d}\tau\\
		&\lesssim&\left\|u \right\|_{L^{q}(L^{\frac{12-6\theta}{3-\theta}})}\|\nabla u\|_{L^{p}\mathcal{M}^{3}(\dot{W}^{\frac{1+\theta}{1+\alpha},\frac{6-3\theta}{3-\theta^{2}}}\to L^{\frac{12-6\theta}{9-5\theta}})}\\
& &~~\left\|\|\nabla u\|_{L^{r}}+1\right\|_{L^{r}}\|u\|^{1-\frac{r(-\theta+4-3\alpha\theta+2\alpha\theta^{2})}{(5r-6)(1+\alpha)(2-\theta)}}_{L^{\infty}(L^{2})} \quad(\text{ H\"{o}lder's inequality})\\
		&\lesssim_{t_0}&\left\|u \right\|_{L^{q}(L^{\frac{12-6\theta}{3-\theta}})}\|\nabla u\|_{L^{p}\mathcal{M}^{3}(\dot{W}^{\frac{1+\theta}{1+\alpha},\frac{6-3\theta}{3-\theta^{2}}}\to L^{\frac{12-6\theta}{9-5\theta}})}\\
& &~~\left(\|\nabla u\|_{L^{r}(L^{r})}+1\right)\|u\|^{1-\frac{r(-\theta+4-3\alpha\theta+2\alpha\theta^{2})}{(5r-6)(1+\alpha)(2-\theta)}}_{L^{\infty}(L^{2})},
	\end{eqnarray*}where the exponents satisfy the following relation,
	\begin{equation*}
		\begin{cases}
			\frac{1+\theta}{1+\alpha}-\frac{3}{\frac{6-3\theta}{3-\theta^{2}}} =  (1-\frac{3}{r})\frac{r(-\theta+4-3\alpha\theta+2\alpha\theta^{2})}{(5r-6)(1+\alpha)(2-\theta)}+  (-\frac{3}{2})({1-\frac{r(-\theta+4-3\alpha\theta+2\alpha\theta^{2})}{(5r-6)(1+\alpha)(2-\theta)}}),\\
			1=\frac{1}{r}+\frac{1}{q}+\frac{1}{p},~2\leq p<\infty.
		\end{cases}
	\end{equation*}
	and $r\geq2$ guarantees $0<\frac{r(-\theta+4-3\alpha\theta+2\alpha\theta^{2})}{(5r-6)(1+\alpha)(2-\theta)}<1$.
\end{proof}

\section*{Data availability statement}
No new data were created or analysed in this study.

\section*{Acknowledgements}
This work was supported by National Natural Science Foundation of China (Grant No. 12271470).

	
	\end{document}